\newtheorem{Theorem}{Theorem}
\newtheorem{prop}{Proposition}
\def\beq#1#2\eeq{%
        \begin{equation}%
        \label{#1}%
            #2%
        \end{equation}%
    }
\title[Lyapunov and Conway]{Growth of values of binary quadratic forms and Conway rivers}
\author{ K. Spalding}\address{Department of Mathematical Sciences,
Loughborough University, Loughborough LE11 3TU, UK}
\email{K.Spalding@lboro.ac.uk}
\author{A.P. Veselov}
\address{Department of Mathematical Sciences,
Loughborough University, Loughborough LE11 3TU, UK  and Moscow State University, Moscow 119899, Russia}
\email{A.P.Veselov@lboro.ac.uk}
\begin{document}

\maketitle

\begin{abstract}
We study the growth of the values of integer binary quadratic forms $Q$ on a binary planar tree as it was described by Conway. We show that the corresponding Lyapunov exponents $\Lambda_Q(x)$ as a function of the path determined by $x\in \mathbb RP^1$ are twice the values of the corresponding exponents for the growth of Markov numbers \cite{SV}, except for the paths corresponding to the Conway river, when $\Lambda_Q(x)=0.$ The relation with the Galois result about pure periodic continued fractions is explained and interpreted geometrically.
\end{abstract}


\section{Introduction}

In his book ``The Sensual (Quadratic) Form" Conway \cite{Conway} described the following ``topographic" way to visualise the values of a binary quadratic form
\begin{equation}
\label{Q}
Q(x, y) = ax^2 + hxy + by^2
\end{equation}
with integer coefficients $a,b,c$ on the integer lattice $(x,y) \in \mathbb{Z}^2.$

Following Conway we introduce the notions of the {\it lax} vector as a pair $(\pm v), v \in \mathbb Z^2$, and of the \emph{superbase} of the integer lattice $\mathbb Z^2$ as a triple of lax vectors $(\pm e_1, \pm e_2, \pm e_3)$ such that $(e_1, e_2)$ is a basis of the lattice and
		\begin{equation*}
		e_1 + e_2 + e_3 =  0.
		\end{equation*}
It is easy to see that every basis can be included in exactly two superbases, which we can represent using the binary tree embedded in the plane (see Figure 1).
The lax vectors live in the complement to the tree (we show only one representative of them), while the superbases correspond to the vertices.
	
%
		
\begin{figure}[h]
\begin{center}
 \includegraphics[height=48mm]{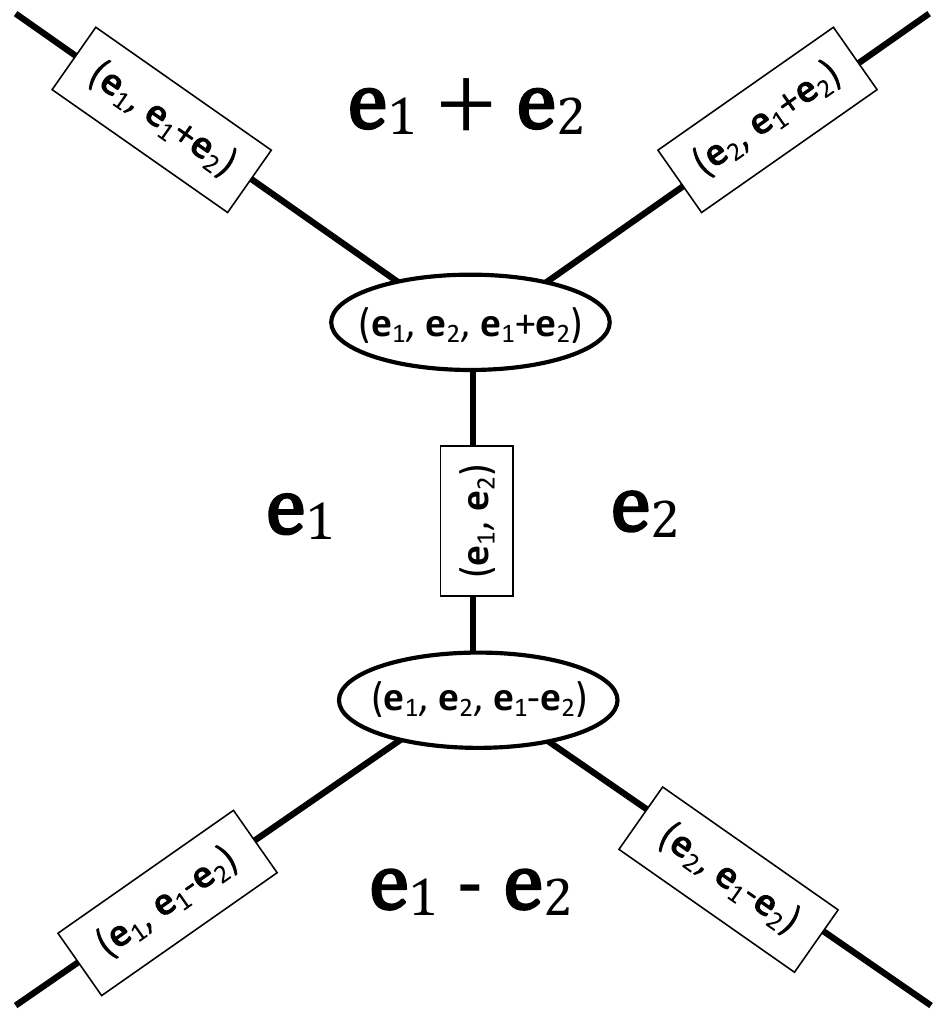}  \hspace{8pt}  \includegraphics[height=48mm]{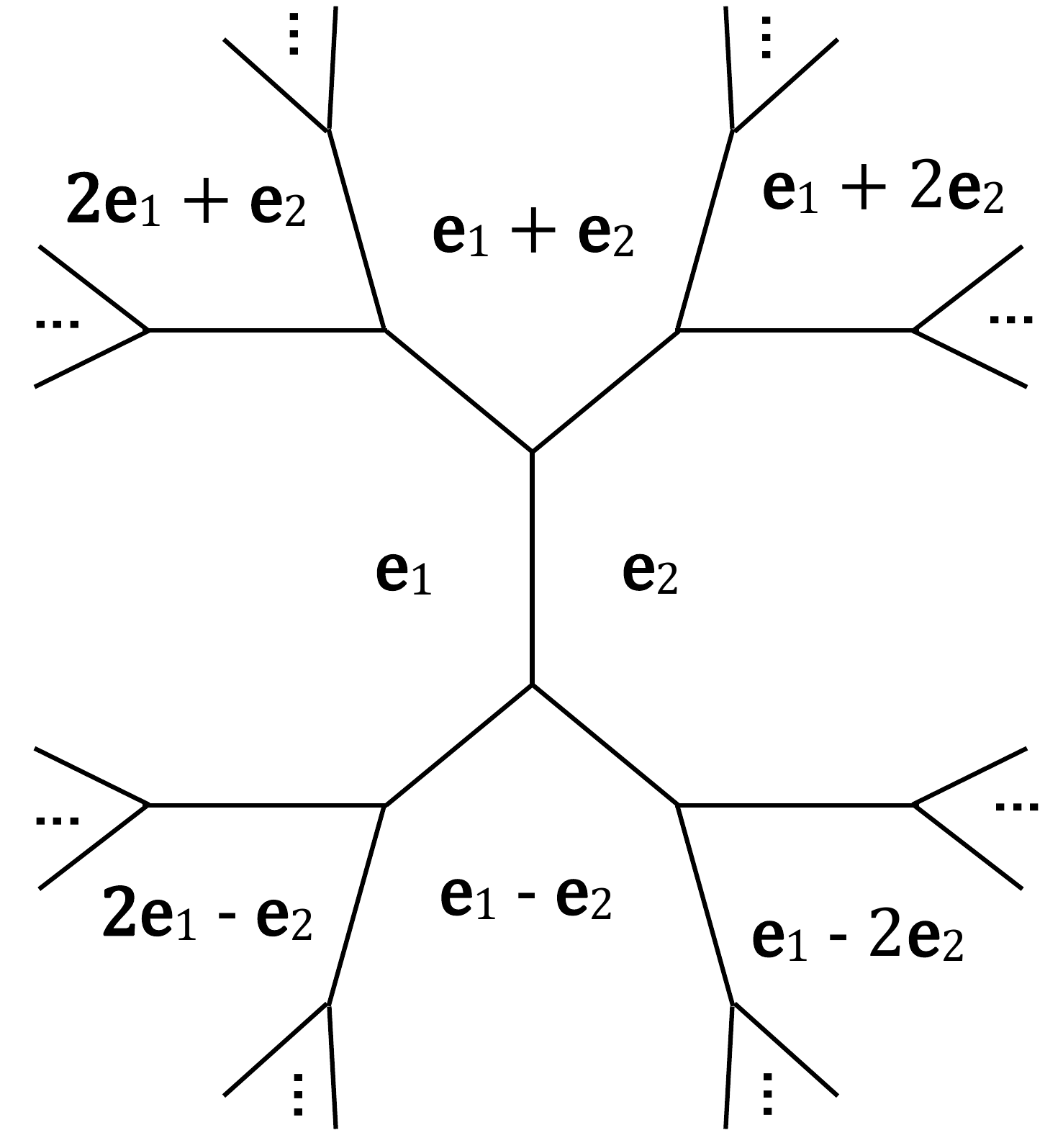}
\caption{\small The superbase tree and its version with only faces labelled.}
\end{center}
\end{figure}

%
%
	
By taking values of the form $Q$ on the vectors of the superbase, we get what Conway called the {\it topograph} of $Q$ containing the values of $Q$ on all primitive lattice vectors.
In particular, if $\mathbf{e}_1=(1,0), \mathbf{e}_2=(0,1), \mathbf{e}_3=-(1,1)$ we have the values 
$$
Q(\mathbf{e}_1)=a, \,\, Q(\mathbf{e}_2)=b, \,\, Q(\mathbf{e}_3)=c:=a+b+h.
$$
One can construct the topograph of $Q$ starting from this triple using the following property of any quadratic form, which Conway called the {\it arithmetic progression rule} and is known also in geometry as the {\it parallelogram rule}:
\begin{equation}
\label{apr}
Q(\mathbf{u}+\mathbf{v})+Q(\mathbf{u}-\mathbf{v})=2(Q(\mathbf{u})+Q(\mathbf{v})), \quad \mathbf{u},\mathbf{v} \in \mathbb R^2.
\end{equation}
Note that quadratic forms (in any dimension) can be characterised as the degree 2 homogeneous functions satisfying relation (\ref{apr}).
This leads to the following relation on the topograph of $Q$ (see Fig. 2), which can be used to compute the primitive values of $Q$ recursively.
\begin{figure}[h]
	\centering
	\includegraphics[scale=0.25]{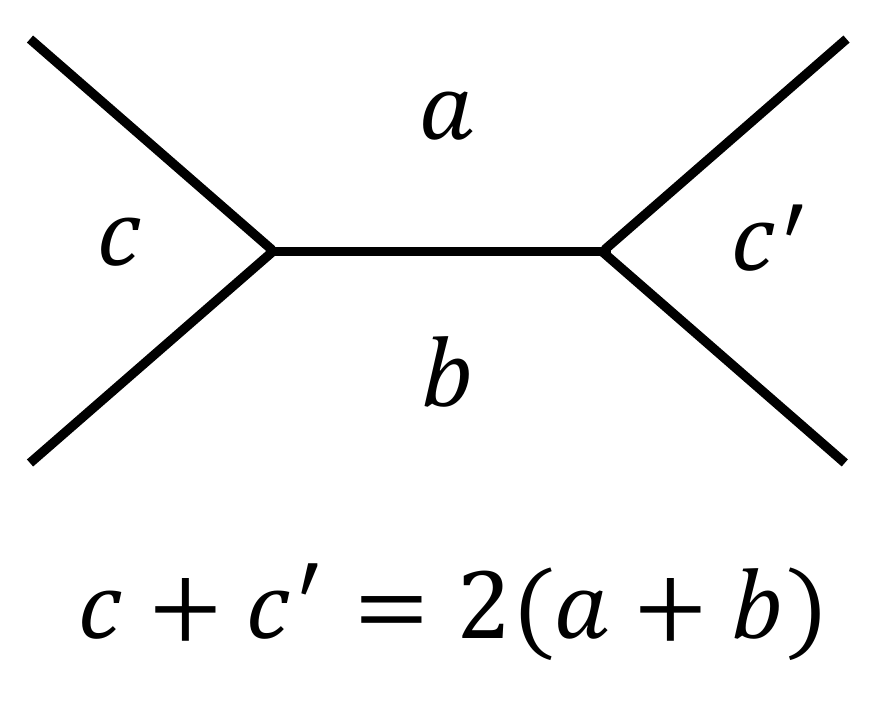}
	\caption{Arithmetic progression rule for values of quadratic forms.}
	\end{figure}

In particular, for the standard quadratic form $Q(x,y)=x^2+y^2$ we have the tree of which half is shown on the left of Fig. 3.
On the right side of Fig. 3 we show the positive part of the corresponding {\it Farey tree}, where at each vertex we have the fractions $\frac{p}{r}, \, \frac{q}{s}$ and its {\it Farey mediant}
$\frac{p+q}{r+s}$ (see e.g. \cite{Hatcher,SV}), starting with $\frac{1}{0}, \, \frac{0}{1}.$

Using the Farey tree we can identify the infinite paths $\gamma$ on a binary tree with real numbers $\xi \in [0,\infty]$ (more details in the next section).
For example, for the golden ratio $\xi=\varphi:=\frac{\sqrt{5}+1}{2}$ we have the Fibonacci path shown in bold on both trees of Fig. 3.

We would like to study the growth of the values of $Q$ along the path $\gamma_\xi$. More precisely, define
\beq{defL}
\Lambda_Q(\xi)=\limsup_{n\to\infty}\frac{\ln |Q_n(\xi)|}{n},
\eeq
where $$|Q_n(\xi)| = \max(|a_n(\xi)|, |b_n(\xi)|, |c_n(\xi)|)$$ with $(a_n(\xi), b_n(\xi), c_n(\xi))$ being the $n$-th topographic triple on the path $\gamma_\xi.$

For example, for the Fibonacci path with $\xi=\varphi$ we have $q_n=F_{2n}$ being every second Fibonacci number with the growth
$$
\limsup_{n\to\infty}\frac{\ln |Q_n(\xi)|}{n}=\lim_{n\to\infty}\frac{\ln F_{2n}}{n}=\ln \varphi^2=2 \ln \varphi.
$$
We will show that a similar result is true for any positive binary quadratic form $Q$, namely that
\beq{LQ}
\Lambda_Q(\xi)=2 \Lambda(\xi),
\eeq
where $\Lambda(\xi)$ is the function introduced in \cite{SV} describing the growth of the Markov numbers, or, equivalently, the growth of the monoid $SL_2(\mathbb N)$. 

The monoid $SL_2(\mathbb N)$ consists of matrices from $SL_2(\mathbb Z)$ with non-negative entries. These can be seen on the edges of the Farey tree if we combine two neighbouring fractions $\frac{p}{r}, \, \frac{q}{s}$ into the matrix
\beq{A}
A= \begin{pmatrix}
  p & q \\
  r & s \\
\end{pmatrix} \in SL_2(\mathbb N).
\eeq
Let $\lambda_1, \lambda_2$ be the eigenvalues of $A$, which are real numbers with $\lambda_1\lambda_2=1.$

The function $\Lambda(\xi)$ can be defined as 
\beq{defeucl}
\Lambda(\xi)=\limsup_{n\to\infty}\frac{\ln \rho(A_n(\xi))}{n},
\eeq
where $A_n(\xi) \in SL_2(\mathbb N)$ is attached to the $n$-th edge along path $\gamma_\xi$
and $$\rho(A)=max(|\lambda_1|,|\lambda_2|)$$ is the {\it spectral radius} of the matrix $A$ (see \cite{SV}).

The function $\Lambda(\xi)$ can be extended to $\xi \in {\mathbb R}P^1$ and has very peculiar properties: it is discontinuous everywhere, takes all real values from $[0, \ln \varphi]$ and is invariant under the projective action of $GL_2(\mathbb Z)$ on ${\mathbb R}P^1$
(see \cite{SV}).

\begin{figure}[h]
\begin{center}
 \includegraphics[height=48mm]{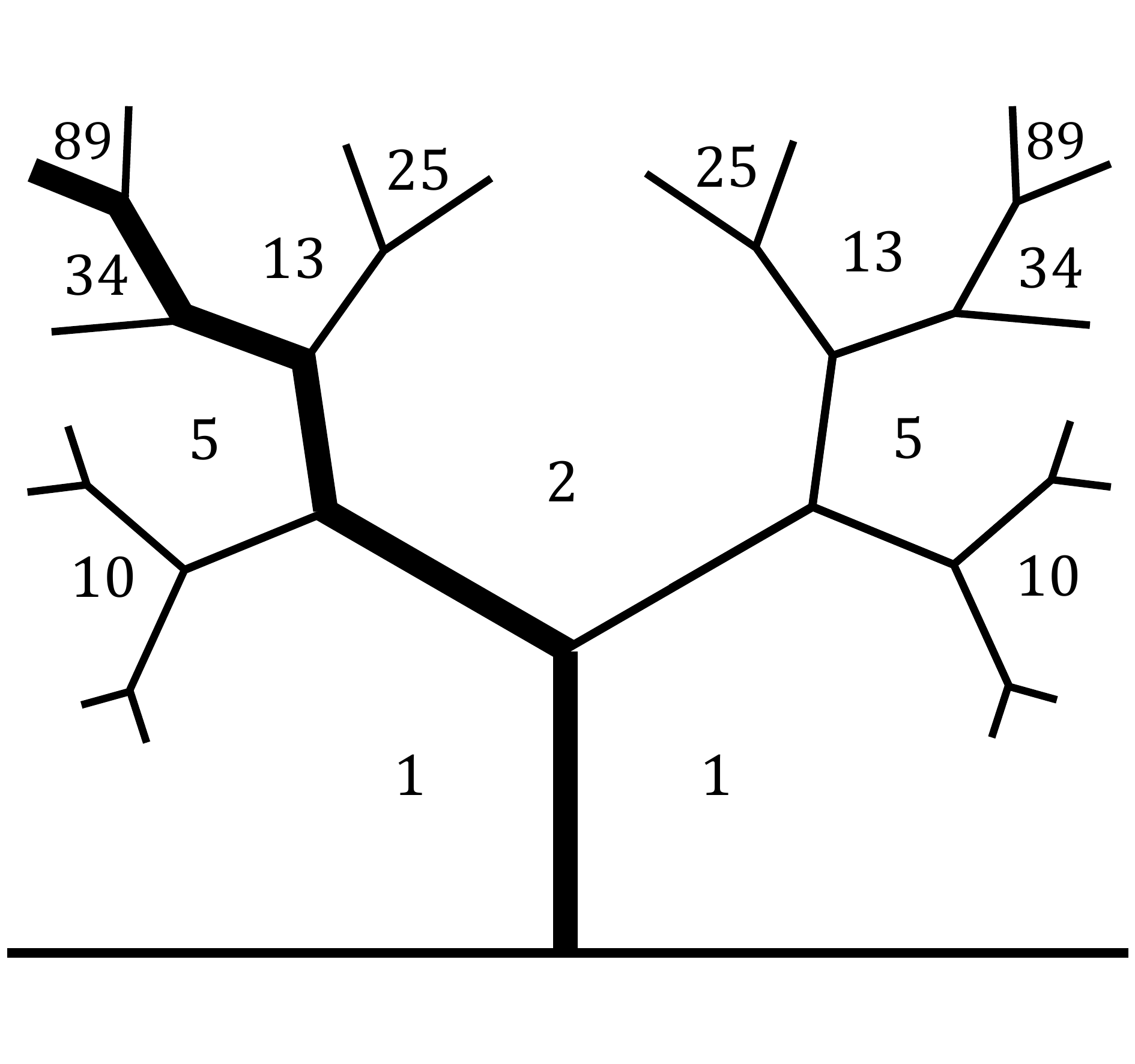}  \hspace{8pt}  \includegraphics[height=48mm]{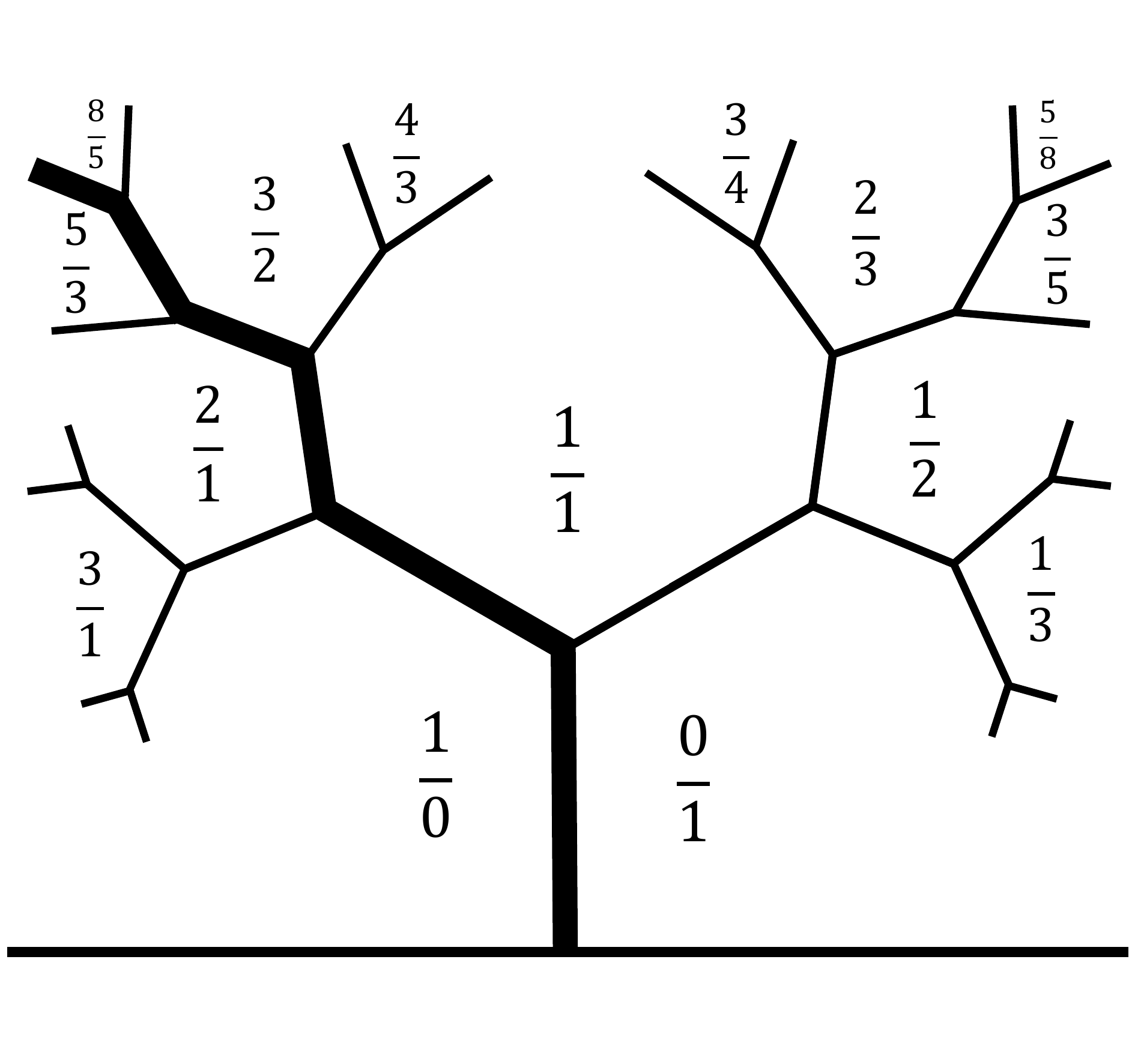}
\caption{\small Topograph of $Q=x^2+y^2$ and the corresponding positive part of the Farey tree with marked Fibonacci path.}
\end{center}
\end{figure}

The situation is different for indefinite binary quadratic forms. The reason is the existence of what Conway \cite{Conway} called the {\it river}, which, in the case of the forms not representing zero, is an infinite path on the binary tree separating positive and negative values of the form $Q$ (see Fig. 4).

\begin{figure}[h]
\begin{center}
 \includegraphics[height=40mm]{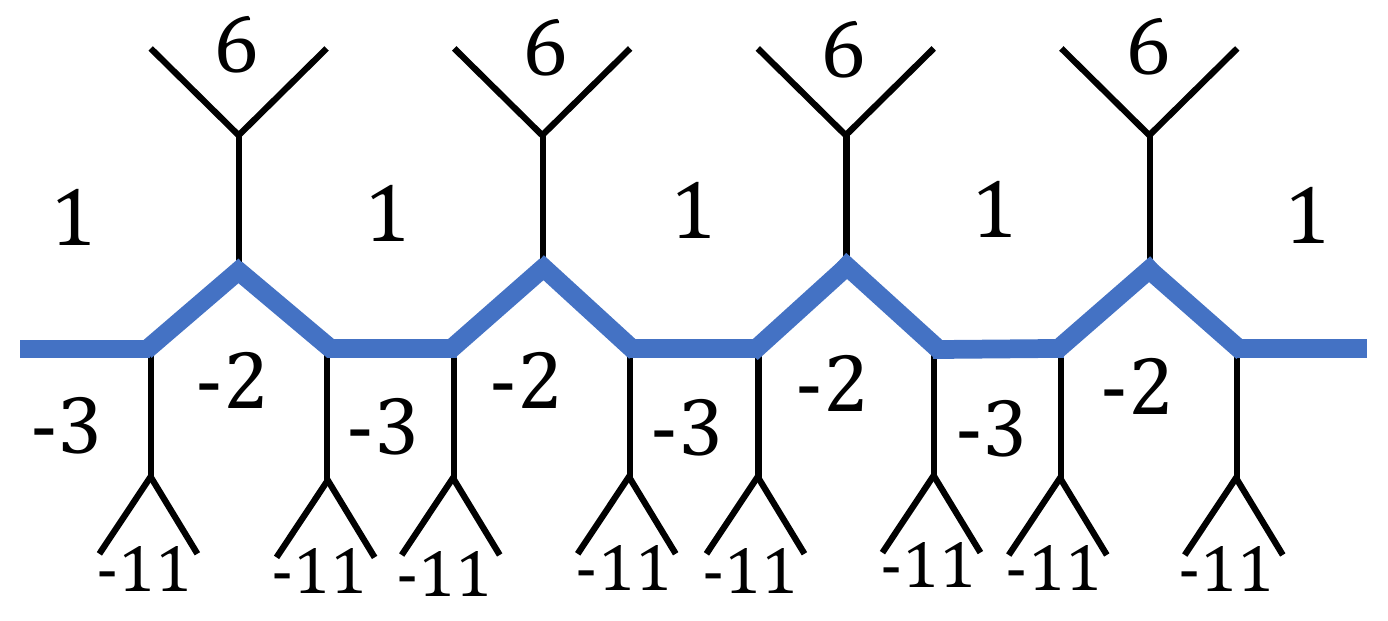}  
\caption{\small Conway river for the indefinite binary quadratic form  $Q=x^2-2xy-2y^2.$}
\end{center}
\end{figure}

Let $\alpha_{\pm}$ be the two real roots of the corresponding quadratic equation
$$
Q(\alpha,1)=0.
$$
Our main result says that for an indefinite form $Q$ not representing zero
\beq{LQ2}
\Lambda_Q(\xi)=2 \Lambda(\xi), \quad \xi \neq \alpha_{\pm}
\eeq
with $\Lambda_Q(\alpha_{\pm})=0 \neq 2 \Lambda(\alpha_{\pm}).$ 
For example, for $Q=x^2-2xy-2y^2$ we have $\alpha_{\pm}=1\pm \sqrt{3}$ with $$\Lambda(1+\sqrt{3})=\frac{1}{3} \ln (2+\sqrt{3}), \,\, \Lambda_Q(1+\sqrt{3})=0.$$
We will show that the two exceptional paths with zero growth are exactly those leading to the two ends of the Conway river (see Fig. 6 below).

We also discuss in more detail the geometry of the corresponding exceptional paths in relation to the continued fraction expansions of the quadratic irrationals $\alpha_{\pm}$. The Galois result about pure periodic continued fractions \cite{Galois} naturally appears in this way.

In the case when indefinite form $Q$ does represent zero (which means that its discriminant is total square) the roots $\alpha_{\pm}$ are rational, so $\Lambda(\alpha_{\pm})=0$ and $\Lambda_Q(\xi)=2\Lambda(\xi)$ for all $\xi$ in this case. The same is true for the semidefinite forms with discriminant zero.

\section{Paths in a binary planar tree, negative continued fractions and Galois theorem}

Let us first describe in more detail the correspondence between $\xi\in [0,\infty]$ and paths $\gamma$ in the planar binary rooted tree shown on the right of Fig. 3. Indeed, the corresponding Farey tree has a unique path $\gamma_\xi$ for any irrational positive $\xi$ such that the limit of the adjacent fractions is $\xi$ (for rational $\xi$ such a path is finite and leads to the corresponding fraction).

In terms of the corresponding continued fraction expansion
\begin{equation*}
\xi = c_0 + \cfrac{1}{c_1 
          + \cfrac{1}{c_2
          + \ddots } } :=\left[ c_0, c_1, c_2, \ldots \right], \quad c_i\in \mathbb Z_+,
\end{equation*}
the path $\gamma_\xi$ starts from the root and can be described as $c_0$ left-turns on the tree, followed by $c_1$ right-turns, followed by $c_2$ left-turns, etc.

For every oriented edge $E$ adjacent to two Farey fractions $\frac{a}{c}$, $\frac{b}{d}$ we assign the unimodular matrix
$$
A_E=\begin{pmatrix}
  a & b \\
  c & d \\
\end{pmatrix}.
$$
Then the corresponding matrix $A_n(\gamma),\, \gamma=\gamma_\xi$ is the product of the first $n$ matrices along the path $\gamma$:
\beq{mata}
A_n(\gamma)= L^{c_0}R^{c_1}L^{c_2}R^{c_3} \ldots 
\eeq
where 
$$
L=\begin{pmatrix}
  1 & 1 \\
  0 & 1 \\
\end{pmatrix}, \quad R= \begin{pmatrix}
  1 & 0 \\
  1 & 1 \\
\end{pmatrix}.
$$
The matrices $L$ and $R$ freely generate the monoid $SL_2(\mathbb N)$, which is the positive part of the group $SL_2(\mathbb Z)$, acting on the planar binary rooted tree by left and right turns respectively.

We would like to extend this correspondence to the full binary planar tree, which is known to be the dual tree for the Farey tesselation (see e.g. Hatcher \cite{Hatcher} and left side of Figure 5).

\begin{figure}[h]
\begin{center}
\includegraphics[height=60mm]{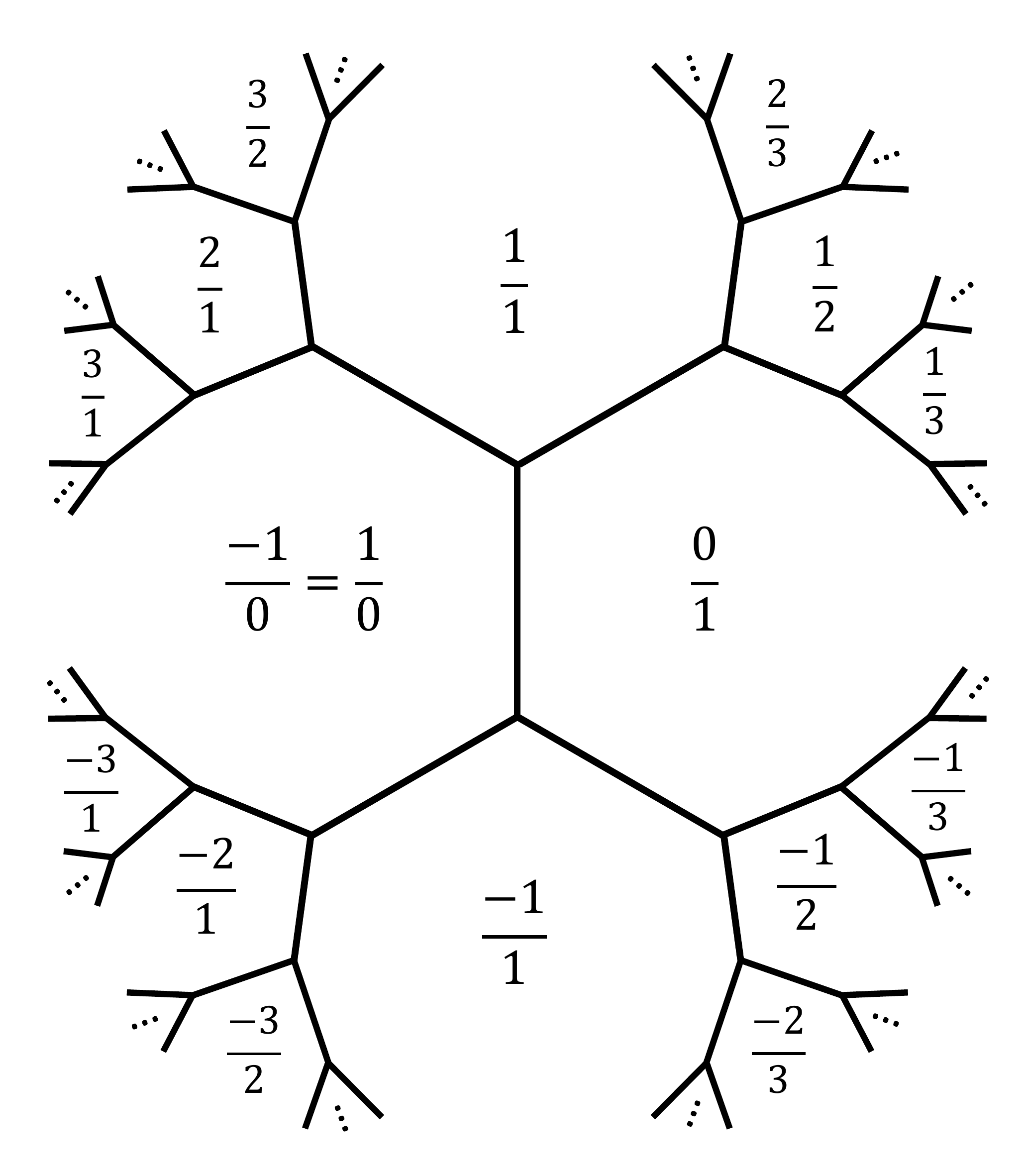}  \hspace{20pt}  \includegraphics[height=60mm]{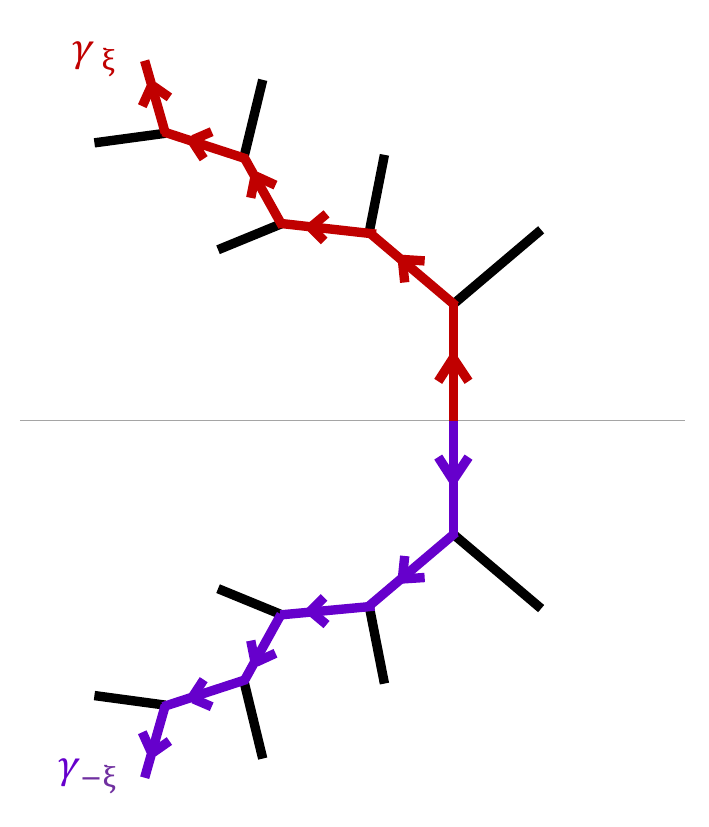}
\caption{\small Full Farey tree and reflected path $\gamma_{-\xi}.$}
\end{center}
\end{figure}

%

In particular, we see from this figure that for the reflected path $\bar\gamma=\gamma_{-\xi}$ the corresponding matrix is 
$$
A_n({\bar\gamma})=L^{-c_0}R^{-c_1}L^{-c_2}R^{-c_3}\ldots,
$$
where $A_n(\gamma)$ is given by (\ref{mata}) and for an edge $E$ in the bottom half of the Farey tree with two adjacent Farey fractions $\frac{a}{c}$, $\frac{b}{d}$ 
we assign the matrix
$$
A_E=\begin{pmatrix}
  -a & b \\
  -c & d \\
\end{pmatrix}.
$$

To be consistent with this it is natural to consider for negative real numbers $\eta=-\xi<0$ with positive $\xi=\left[ c_0, c_1, c_2, c_3 \ldots \right], \quad c_0\geq 0, c_i>0$ the {\it negative continued fraction expansions}
\beq{negcf}
\eta=-[c_0, c_1, c_2, c_3 \ldots]=[-c_0,-c_1,-c_2,-c_3 \ldots].
\eeq
Note that one can always avoid the negative $c_i$ with $i>0$ because of the identity
\beq{negid}
-[c_0, c_1, c_2, c_3, \ldots]
= [-c_0-1, 1, c_1 - 1, c_2, c_3, \ldots],
\eeq
but we are not going to do this.

This is related to the following result discovered by the 18-year-old \'Evariste Galois.
The famous Lagrange's theorem \cite{LeVeque} says that any quadratic irrational has a periodic continued fraction expansion
$
\alpha=[a_0,\dots, a_k, \overline{b_1, \ldots, b_l}],
$
and conversely, any periodic continued fraction represents a quadratic irrational.

Less known is the following addition due to Galois \cite{Galois} characterising {\it pure periodic continued fraction expansions}
$$
\alpha=[\overline{b_1, \ldots, b_l}].
$$


{\bf Theorem (Galois).} {\it A quadratic irrational $\alpha=\frac{A+\sqrt{D}}{B}$ has a pure periodic continued fraction expansion
\begin{equation*}
\alpha =[\overline{b_1, \ldots, b_l}]
\end{equation*}
if and only if its conjugate
$\bar \alpha=\frac{A-\sqrt{D}}{B}$
satisfies the inequality
$$
-1<\bar \alpha<0.
$$
Moreover, in that case}
\begin{equation*}
\bar\alpha = - [0, \overline{b_l, \ldots, b_1}].
\end{equation*}

As we will see later, geometrically the conjugate $\bar\alpha$ determines the path going backwards along the corresponding Conway river. 


\section{Continued fraction expansion of quadratic irrationals and their conjugates}

To describe our main result we need an answer to the following natural question.
Assume that we know the continued fraction expansion of a quadratic irrational
\begin{equation}
\label{pcf}
\alpha=[a_0,\dots, a_k, \overline{b_1, \ldots, b_l}].
\end{equation}
What is the continued fraction expansion of its conjugate $\bar\alpha$?

For example, we have 
\begin{equation}
\label{ex1}
\alpha=\frac{6+\sqrt{2}}{17}=[0,2,3,\overline{2}], \quad \bar\alpha=\frac{6-\sqrt{2}}{17}=[0,3,1,\overline{2}].
\end{equation}
What is the general rule here?

We could not find the answer to this question in the literature, except in the Galois case, so we present it in this section.

Note first that we can assume that in (\ref{pcf}) $a_k \neq b_l$, because otherwise $\alpha$ can be rewritten with the period $[\overline{b_l, b_1, \ldots, b_{l-1}}]$.

\begin{prop} \label{thm:conj} Let $\alpha = [a_0, a_1, \ldots, a_k, \overline{b_1, \ldots, b_l}]$ be the continued fraction expansion of a quadratic irrational with $a_k < b_l$, and $k \geq 1.$ Then the continued fraction expansion of its conjugate is
\begin{equation}
\label{conj1}
\overline{\alpha} = [a_0, \ldots, a_{k-1}-1, 1, b_l-a_k-1, \overline{b_{l-1}, b_{l-2}, \ldots, b_1, b_l}].
\end{equation}
If $a_k > b_l$, $k \geq 1$, then 
\begin{equation}
\label{conj2}
\overline{\alpha} = \left[a_0, \ldots, a_{k-1}, a_k - b_l-1, 1, b_{l-1}-1, \overline{b_{l-2}, b_{l-3}, \ldots, b_1, b_l, b_{l-1}}\right].
\end{equation}
\end{prop}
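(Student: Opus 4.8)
The plan is to push all the work onto the purely periodic tail, where Galois' theorem computes the conjugate explicitly, and then to transport that information back through the finitely many leading quotients using the fact that Galois conjugation is a field automorphism. Write $\beta=[\overline{b_1,\dots,b_l}]$, so that $\alpha=[a_0,\dots,a_k,\beta]$, and let $p_i/q_i$ be the convergents of $[a_0,\dots,a_k]$. The complete-quotient identity gives
\[
\alpha=\frac{p_k\,\beta+p_{k-1}}{q_k\,\beta+q_{k-1}},
\]
i.e. $\alpha=M\cdot\beta$ under the M\"obius action of $M=\left(\begin{smallmatrix}p_k&p_{k-1}\\ q_k&q_{k-1}\end{smallmatrix}\right)\in GL_2(\mathbb Z)$. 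Since $M$ has integer entries and conjugation fixes $\mathbb Q$, the same relation holds after conjugation: $\bar\alpha=M\cdot\bar\beta$, that is, $\bar\alpha=[a_0,\dots,a_k,\bar\beta]$ read as a formal continued fraction with the tail value $\bar\beta$ in place of $\beta$. Everything now reduces to turning this mixed-sign expression into an honest continued fraction.

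For the second step I would apply Galois' theorem to the purely periodic $\beta$, which gives $\bar\beta=-[0,\overline{b_l,b_{l-1},\dots,b_1}]$, hence $-1<\bar\beta<0$ and
\[
\frac{1}{\bar\beta}=-[\overline{b_l,b_{l-1},\dots,b_1}]=-b_l-\gamma,\qquad \gamma:=[0,\overline{b_{l-1},\dots,b_1,b_l}]\in(0,1).
\]
Thus the innermost expression of $\bar\alpha=[a_0,\dots,a_{k-1},a_k,\bar\beta]$ is
\[
a_k+\frac{1}{\bar\beta}=(a_k-b_l)-\gamma .
\]
Because of the normalisation $a_k\neq b_l$ the integer $a_k-b_l$ is nonzero, and its sign is exactly what produces the two cases: whether $(a_k-b_l)-\gamma$ is already positive or is less than $-1$.

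In the case $a_k>b_l$ the value lies in $(a_k-b_l-1,\,a_k-b_l)$, so its floor is $a_k-b_l-1$ and its fractional part is $1-\gamma$. Writing $1-\gamma=(\mu-1)/\mu$ with $\mu=1/\gamma=[\overline{b_{l-1},\dots,b_1,b_l}]$ and $\mu/(\mu-1)=1+1/(\mu-1)$, one expands $\mu-1=[b_{l-1}-1,\overline{b_{l-2},\dots,b_1,b_l,b_{l-1}}]$ and reads off $a_k+1/\bar\beta=[a_k-b_l-1,\,1,\,b_{l-1}-1,\,\overline{b_{l-2},\dots,b_1,b_l,b_{l-1}}]$; prepending the unchanged $a_0,\dots,a_{k-1}$ gives \mref{conj2}. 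In the case $a_k<b_l$ the same value is $<-1$, so the negative carry must be taken one step further back: with $m=b_l-a_k\ge1$ one absorbs $-(m+\gamma)$ into $a_{k-1}$ by the reflection identity \mref{negid}, which produces the characteristic block $a_{k-1}-1,\,1,\,b_l-a_k-1$ followed by the reversed period $\overline{b_{l-1},\dots,b_1,b_l}$, i.e. \mref{conj1}. The hypothesis $k\ge1$ is used here precisely to guarantee that a quotient $a_{k-1}$ is available to receive this carry.

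The essential difficulty is not conceptual but lies in the bookkeeping of this last rewriting: one must verify that every newly created partial quotient is a positive integer, and handle the degenerate boundary situations in which the displayed formulas must be read with care --- small $l$ (so the reversed periods are interpreted cyclically), $a_{k-1}=1$ (whence $a_{k-1}-1=0$), $a_0=0$, and splices that momentarily create a partial quotient $0$, which must be removed through $[\dots,c,0,d,\dots]=[\dots,c+d,\dots]$. I would organise these as a finite list of boundary cases and finally validate the whole scheme on the worked example $\frac{6+\sqrt2}{17}=[0,2,3,\overline2]$, $\frac{6-\sqrt2}{17}=[0,3,1,\overline2]$.
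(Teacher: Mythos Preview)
Your approach is correct and essentially the same as the paper's: both pass to the purely periodic tail $\beta$, invoke Galois to get $\bar\beta=-[0,\overline{b_l,\dots,b_1}]$, and then use the reflection identity \mref{negid} together with the zero-removal rules \mref{eq:onezero}--\mref{eq:twozero} to convert $[a_0,\dots,a_k,\bar\beta]$ into an admissible expansion. The only organisational differences are that the paper first reduces to $k=1$ and carries out the simplification as one explicit chain of nested-fraction equalities, and it derives the case $a_k>b_l$ from the case $a_k<b_l$ via the rewriting $[\,\dots,a_k,\overline{b_1,\dots,b_l}\,]=[\,\dots,a_k-b_l,0,\overline{b_l,b_1,\dots,b_{l-1}}\,]$, whereas you treat the two sign cases symmetrically through the floor of $a_k-b_l-\gamma$; both routes land on the same formulas and the same boundary bookkeeping.
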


Here it will be convenient for us to allow the zeros in the continued fraction expansion, which can be always avoided using the identities
\begin{equation} \label{eq:onezero}
\left[ c_0, \ldots, c_i, 0, c_{i+1}, c_{i+2}, c_{i+3} \ldots \right] = \left[ c_0, \ldots, c_i + c_{i+1}, c_{i+2}, c_{i+3} \ldots \right],
\end{equation}
\begin{equation} \label{eq:twozero}
\left[ c_0, \ldots, c_i, 0, 0, c_{i+1}, c_{i+2}, c_{i+3} \ldots \right] = \left[ c_0, \ldots, c_i, c_{i+1}, c_{i+2}, c_{i+3} \ldots \right].
\end{equation}

In particular, in the example (\ref{ex1}) the formula (\ref{conj2}) gives for $\alpha=[0,2,3,\overline{2}]$
$$
\bar\alpha=[0,2,0,1,1,\bar{2}]=[0,3,1,\overline{2}].
$$
A more sophisticated example:
$$
\alpha=\frac{11523+\sqrt{15006}}{9222}=[1,3,1,4,\overline{7,2,3,9}],
$$
$$
\quad \bar\alpha=\frac{11523-\sqrt{15006}}{9222}=[1,3,0,1,4, \overline{3,2,7,9}] =[1,4,4,\overline{3,2,7,9}].
$$

\begin{proof} It is enough to consider the case $k=1$. Let us first define
\begin{equation*}
\beta = \left[\overline{b_1, \ldots, b_{l-1}, b_l}\right], \quad
\tilde{\beta} = \left[\overline{b_{l-1}, b_{l-2}, \ldots, b_1, b_l}\right].
\end{equation*}
Then, by definition,
\begin{equation*}
\alpha = \left[a_0, a_1, \overline{b_1, \ldots, b_l}\right] = a_0 + \cfrac{1}{a_1 
          + \cfrac{1}{\beta} },
\end{equation*}
so that
\begin{equation*}
\overline{\alpha} = a_0 + \cfrac{1}{a_1 
          + \cfrac{1}{\overline{\beta}} }:=[a_0,a_1, \overline{\beta}].
\end{equation*}
Using the Galois result and identity  (\ref{negid}) we have
$$
\overline{\beta} = - \left[0, \overline{b_l, \ldots, b_1} \right] = \left[-1, 1, b_l - 1, \tilde\beta \right].
$$
where 
$\tilde\beta=[\overline{b_{l-1}, \ldots, b_1, b_l}].$

We can now directly compute
$$
\overline{\alpha} = [a_0,a_1, \overline{\beta}]=\left[a_0,a_1,-1, 1, b_l - 1, \tilde\beta \right]
$$
$$
= \left[a_0,a_1,-1, 1+ \frac{\tilde\beta}{\left(b_l - 1\right)\tilde{\beta} + 1}  \right]
= \left[a_0,a_1,-1+\frac{\left(b_l - 1\right)\tilde{\beta} + 1}{b_l \tilde{\beta} + 1}    \right]
$$
$$
=\left[a_0, a_1 
          + \frac{b_l \tilde{\beta} + 1}{-\tilde{\beta} }   \right]=a_0 - \frac{\tilde{\beta}}{\left(b_l - a_1\right)\tilde{\beta} + 1 } 
$$

$$
= a_0 - 1 + \cfrac{1}{ 
           \cfrac{\left(b_l - a_1\right)\tilde{\beta} + 1}{\tilde{\beta} \left(b_l - a_1 - 1 \right) + 1 } } \\
= a_0 - 1 + \cfrac{1}{1 
          + \cfrac{\tilde{\beta}}{\tilde{\beta} \left(b_l - a_1 - 1 \right) + 1 } }
          $$
          $$
= a_0 - 1 + \cfrac{1}{1 
          + \cfrac{1}{\left(b_l - a_1 - 1 \right)  
					+ \cfrac{1}{\tilde{\beta}} } } \\
= \left[ a_0 - 1 , 1, b_l - a_1 - 1, \overline{b_{l-1}, \ldots, b_0, b_l} \right].
$$
This completes the case $a_k<b_l.$ In the case $a_k>b_l$
we have  by \eqref{eq:onezero},
\begin{equation*}
\alpha = \left[a_0, \ldots, a_k - b_l, 0, \overline{b_l, b_1, \ldots, b_{l-1}}\right],
\end{equation*}
and the result then immediately follows from the previous case with $a_k$ replaced by $0$ and $a_{k-1}$ replaced by $a_k - b_l$.
\end{proof}


Note that formulae (\ref{conj1}),(\ref{conj2}) determine involutions since for $a_k<b_l$ we have
\begin{align*}
\overline{\overline{\alpha}} &= \left[a_0, \ldots, a_{k-1}-1, 1-1, 1, b_l - \left( b_l - a_k - 1 \right) - 1 , \overline{b_1, \ldots, b_{l-1}, b_l}\right] \\
&= \left[a_0, \ldots, a_{k-1}-1, 0, 1, a_k , \overline{b_1, \ldots, b_{l-1}, b_l}\right] \\
&= \left[a_0, \ldots, a_{k-1}, a_k , \overline{b_1, \ldots, b_{l-1}, b_l}\right] = \alpha,
\end{align*}
while otherwise
\begin{align*}
\overline{\overline{\alpha}} &= \left[a_0, \ldots, a_{k-1}, a_k - b_l -1, 1-1, 1, b_{l-1}-1 -\left(b_{l-1}-1\right), \overline{b_{l}, b_1, \ldots, b_{l-1}}\right] \\
&= \left[a_0, \ldots, a_{k-1}, a_k - b_l -1, 0, 1, 0, \overline{b_{l}, b_1, \ldots, b_{l-1}}\right] \\
&= \left[a_0, \ldots, a_{k-1}, a_k- b_l, 0, \overline{b_{l}, b_1, \ldots, b_{l-1}}\right] = \alpha.
\end{align*}

Let us consider now the special case $k=0.$

\begin{prop} \label{thm:riverconj} Let $\alpha=[a_0, \overline{b_1,\dots, b_l}]$ with $a_0 < b_l$. Then the conjugate $\overline{\alpha}$ can be given as the negative continued fraction expansion
\begin{equation}
\label{conj3}
\overline{\alpha} = - \left[ b_l - a_0, \overline{b_l, \ldots, b_1} \right].
\end{equation}
When $a_0 > b_l$ we have 
\begin{equation}
\label{conj4}
\overline{\alpha}=[a_0-b_l-1,1,b_l-1, \overline{b_{l-1}, b_{l-2},\dots, b_1, b_l}].
\end{equation}
\end{prop}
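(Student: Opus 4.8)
The plan is to mirror the proof of Proposition~\ref{thm:conj}, the new feature being that for $k=0$ the single leading partial quotient can reverse the sign of $\overline{\alpha}$, which is precisely what forces the \emph{negative} continued fraction in \eqref{conj3}. First I would write
\[
\alpha = [a_0, \overline{b_1, \ldots, b_l}] = a_0 + \frac{1}{\beta}, \qquad \beta = [\overline{b_1, \ldots, b_l}],
\]
with $\beta$ purely periodic. Conjugation is the nontrivial automorphism of $\mathbb{Q}(\sqrt{D})$ and fixes the rational $a_0$, so $\overline{\alpha} = a_0 + 1/\overline{\beta}$. Applying the Galois theorem to $\beta$ gives $\overline{\beta} = -[0, \overline{b_l, \ldots, b_1}]$, whence $1/\overline{\beta} = -[\overline{b_l, \ldots, b_1}]$ and
\[
\overline{\alpha} = a_0 - \gamma, \qquad \gamma := [\overline{b_l, b_{l-1}, \ldots, b_1}].
\]
Everything else is a matter of reading \eqref{conj3} and \eqref{conj4} off this one identity.

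I would then separate the two cases using $b_l < \gamma < b_l + 1$, which follows by peeling the leading term, $\gamma = b_l + 1/\gamma_1$ with $\gamma_1 := [\overline{b_{l-1}, \ldots, b_1, b_l}] > 1$. If $a_0 < b_l$ then $\overline{\alpha} < 0$, and substituting $\gamma = b_l + 1/\gamma_1$ gives
\[
\overline{\alpha} = -(\gamma - a_0) = -\bigl[\, b_l - a_0, \overline{b_{l-1}, b_{l-2}, \ldots, b_1, b_l}\,\bigr],
\]
a genuine negative continued fraction since $b_l - a_0 \geq 1$; the period here is the cyclic shift of the reversed period, exactly as in \eqref{conj1}, and this yields \eqref{conj3}. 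If $a_0 > b_l$ then $\overline{\alpha} = a_0 - \gamma \in (a_0 - b_l - 1, a_0 - b_l)$ is positive, and I would reach \eqref{conj4} either by converting $a_0 - \gamma = (a_0 - b_l - 1) + (1 - 1/\gamma_1)$ into a standard expansion through the identity $1 - 1/x = 1/(1 + 1/(x-1))$, or---more in the spirit of Proposition~\ref{thm:conj}---by first rewriting $\alpha = [a_0 - b_l, 0, \overline{b_l, b_1, \ldots, b_{l-1}}]$ via \eqref{eq:onezero} and applying \eqref{conj1} with $k=1$.

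The step I expect to be the main obstacle is the combinatorial bookkeeping of the periodic block, where two operations act at once: the \emph{reversal} produced by Galois and the \emph{cyclic shift} produced by peeling the leading $b_l$. These must be tracked together to land on the correct period $\overline{b_{l-1}, b_{l-2}, \ldots, b_1, b_l}$ rather than on a plain reversal. A secondary difficulty is that rewriting $a_0 - \gamma$ as an honest continued fraction introduces partial quotients equal to $1$, and sometimes to $0$ (for instance when $b_{l-1} = 1$), which have to be absorbed with the identities \eqref{eq:onezero}--\eqref{eq:twozero} while keeping the sign conventions of \eqref{negcf} consistent. As a final check I would confirm that the map is involutive, $\overline{\overline{\alpha}} = \alpha$, just as was verified after Proposition~\ref{thm:conj}.
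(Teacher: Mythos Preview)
Your proposal is correct and follows essentially the same route as the paper: write $\alpha = a_0 + 1/\beta$ with $\beta$ purely periodic, apply Galois to get $\overline{\alpha} = a_0 - [\overline{b_l,\ldots,b_1}]$, then peel off the leading $b_l$ for the case $a_0 < b_l$ and, for $a_0 > b_l$, rewrite $\alpha = [a_0 - b_l, 0, \overline{b_l, b_1,\ldots,b_{l-1}}]$ via \eqref{eq:onezero} and invoke Proposition~\ref{thm:conj}. Your extra option of handling $a_0 > b_l$ directly through $1 - 1/x = 1/(1 + 1/(x-1))$ is a minor variant, and your remark that the resulting period is the cyclic shift $\overline{b_{l-1},\ldots,b_1,b_l}$ (rather than the plain reversal written in \eqref{conj3}) is exactly what the paper's own computation produces.
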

\begin{proof}
We have
\begin{equation*}
\left[a_0, \overline{b_1, \ldots, b_n}\right] = a_0 + \left[ 0, \overline{b_0, \ldots, b_n}\right],
\end{equation*}
so by the Galois result
\begin{align*}
\overline{a_0 + \left[ 0, \overline{b_1, \ldots, b_l}\right]} &= a_0 - \left[\overline{b_l, \ldots, b_1} \right] \\
&= - \left[b_l - a_0, \overline{b_{l-1}, \ldots, b_1, b_l} \right].
\end{align*}

In the case when $a_0 > b_l$, we can rewrite 
\begin{equation*}
\left[a_0, \overline{b_1, \ldots, b_l} \right] = \left[a_0 - b_l, 0, \overline{b_l, b_1 \ldots, b_{l-1}} \right]
\end{equation*}
and proceed as in the case when $k\geq 1$ to get the formula (\ref{conj4}).
\end{proof}

Note that only in the Galois pure periodic case and in the case with $k=0, a_0 < b_l$ do we have to use negative continued fractions. We will see now that
these are the only cases when the initial position is on the Conway river.

\section{Paths to Conway river}

Let 
\begin{equation*}
Q(x, y) = ax^2 + hxy + by^2
\end{equation*}
be an indefinite integer binary quadratic form not representing zero, meaning that
$Q(x,y) \neq 0$ for all $(x,y)\in \mathbb Z^2\setminus (0,0).$ Equivalently, this means that the discriminant of the form
$
D=h^2-4ab
$
is positive, but not a total square.

In that case Conway \cite{Conway} showed that on the topograph of $Q$ positive and negative values are separated by a unique periodic river
(see Fig. 4). 



We are now going to explain how the continued fractions determine the path from the initial vertex $V_Q$ on the topograph with $a=Q(1,0),\, b=Q(0,1),\,c=Q(1,1)=a+b+h$ to the Conway river and the relation with the Galois result.

Let us assume for simplicity that all the values $a,b,c$ are of the same sign (say, positive), otherwise $V_Q$ is already on the river. Let us assume also that $h<0$, so we are going down to the river.

Let $\alpha, \bar\alpha$ be the real roots of the quadratic equation
\begin{equation} \label{eq:root}
Q(\alpha, 1) = a \alpha^2+h \alpha + b=0.
\end{equation}
Again for simplicity let us assume that
$$
\alpha= \frac{-h + \sqrt{D}}{2a}, \quad D=h^2-4ab
$$
is the dominant (maximal modulus) root.
Let 
\begin{equation} \label{eq:alphaexp}
\alpha = [a_0, a_1, \ldots, a_k, \overline{b_1, \ldots, b_l}]
\end{equation}
be the continued fraction expansion of $\alpha.$ 

From the general theory of the Farey tree and continued fractions we have the following result.

\begin{prop}
The continued fraction expansion (\ref{eq:alphaexp}) describes the unique path $\gamma_\alpha$ going from $V_Q$ to the corresponding Conway river, and then along the river towards $\alpha$. A similar path $\gamma_{\bar\alpha}$ leading to $\bar\alpha$ is described by the Propositions 1 and 2 above.

The Conway river of $Q$ is an infinite in both directions path from $\overline{\alpha}$ to $\alpha$ described by the periodic part $\left[\overline{b_1, \ldots, b_l } \right]$ of the expansion.

The finite path $\pi$ from $V_Q$ to the Conway river can be given for $k\geq 1$ by
\begin{equation*}
\pi = \left\{
  \begin{array}{lr}
    \left[ a_0, \ldots, a_k - b_l -1 \right] & {\text if} \,\, a_k > b_l\\
    \left[ a_0, \ldots, a_{k-1} -1 \right] & {\text if} \,\, a_k < b_l
  \end{array}
\right. 
\end{equation*}
and for $k=0$ by
\begin{equation*}
\pi = \left\{
  \begin{array}{lr}
    a_0 - b_l& {\text if} \,\, a_0 > b_l\\
    \emptyset &{\text if} \,\,  a_0 < b_l
  \end{array}
\right. .
\end{equation*}
\end{prop}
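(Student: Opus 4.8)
The plan is to reduce everything to the geometric fact that the Conway river is the \emph{axis} of the automorph of $Q$, i.e. of the infinite cyclic group of matrices $M \in SL_2(\mathbb Z)$ preserving $Q$. First I would recall from Section 2 that the path $\gamma_\xi$ attached to a continued fraction $[c_0,c_1,\dots]$ converges, as a sequence of adjacent Farey fractions, to the direction $\xi \in \mathbb{RP}^1$. Applying this to $\xi=\alpha$ and $\xi=\bar\alpha$, the paths $\gamma_\alpha,\gamma_{\bar\alpha}$ converge to the two directions where $Q$ vanishes. Since by definition the river separates the regions $Q>0$ and $Q<0$ on the topograph, its two ends on $\mathbb{RP}^1$ are exactly the sign-change directions, namely the roots $\alpha,\bar\alpha$; this identifies the ends of the river with the limits of $\gamma_\alpha$ and $\gamma_{\bar\alpha}$.

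Next I would show that once $\gamma_\alpha$ reaches the river it must run along it toward $\alpha$. The generator $M$ of the automorph is hyperbolic with fixed points exactly $\alpha,\bar\alpha$, so it preserves the sign pattern, hence preserves the river and translates along it, the translation length being the period $l$ of the continued fraction. In the tree any two distinct ends are joined by a unique bi-infinite path, so the river \emph{is} that unique path with ends $\alpha,\bar\alpha$; and since $\gamma_\alpha$ is the unique half-infinite path converging to $\alpha$, its tail must coincide with the half of the river pointing to $\alpha$. The repetition forced by $M$ is precisely the periodic block $[\overline{b_1,\dots,b_l}]$, which therefore describes travel along the river, while the preperiod $[a_0,\dots,a_k]$ describes the descent from $V_Q$; the symmetric statement for $\bar\alpha$ is then read off from Propositions 1 and 2.

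For the explicit shape of the finite path $\pi$ I would use that $\pi$ is the \emph{maximal common initial segment} of $\gamma_\alpha$ and $\gamma_{\bar\alpha}$: in a tree the nearest point of the geodesic $[\alpha,\bar\alpha]$ to the root $V_Q$ is exactly where the paths from $V_Q$ to the two ends diverge, so the vertex where their turn-sequences first differ is where $V_Q$ meets the river. Writing $\gamma_\alpha$ from \eqref{eq:alphaexp} as a left/right turn sequence and $\gamma_{\bar\alpha}$ from the conjugate expansions \eqref{conj1}--\eqref{conj4}, I would compare the two sequences turn by turn and read off the divergence point; collapsing the auxiliary zeros with \eqref{eq:onezero}--\eqref{eq:twozero} then yields the four stated cases, the alternatives $a_k\gtrless b_l$ and $k=0$ versus $k\ge 1$ being exactly the cases distinguished in Propositions 1 and 2.

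The main obstacle is the rigorous identification river $=$ axis of the automorph, together with the precise matching between river edges and continued-fraction turns near the junction with the descending path. Establishing that the sign-separating river is genuinely the translation axis, and not merely asymptotic to it, requires Conway's structure of the topograph --- that off the river the values strictly climb while along it they stay bounded and periodic --- and the bookkeeping at the transition from the preperiod to the periodic block is where the $a_k\gtrless b_l$ dichotomy and the $\pm1$ shifts in $\pi$ originate. This boundary accounting at the junction, rather than the limiting geometry, is the delicate part.
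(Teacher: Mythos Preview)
Your proposal is sound and in fact supplies considerably more argument than the paper itself: the paper states this proposition simply as a consequence of ``the general theory of the Farey tree and continued fractions'', gives no further proof, and then illustrates it with the example $Q=17x^2-12xy+2y^2$ in Fig.~6. So there is no paper proof to compare against in detail.

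Your strategy is the natural one and would succeed. The identification of the river's two ends with the roots $\alpha,\bar\alpha$ follows directly from Conway's description (the river separates $Q>0$ from $Q<0$, so its ends are the sign-change directions), and then basic tree geometry --- any two ends are joined by a unique bi-infinite geodesic, and the ray from $V_Q$ to an end must eventually merge with that geodesic --- gives that $\gamma_\alpha$ reaches and then runs along the river. Your key observation, that the finite descent $\pi$ is precisely the maximal common initial segment of $\gamma_\alpha$ and $\gamma_{\bar\alpha}$, is correct and is exactly what makes the explicit formulae for $\pi$ computable: comparing the turn-sequence $[a_0,\dots,a_k,\overline{b_1,\dots,b_l}]$ with the conjugate expansions \eqref{conj1}--\eqref{conj4} from Propositions~1 and~2, the first point of disagreement lands on the stated prefix in each of the four cases (after collapsing zeros via \eqref{eq:onezero}--\eqref{eq:twozero}).

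One minor simplification: you do not really need the automorph $M$ to identify the river with the $(\alpha,\bar\alpha)$-geodesic --- Conway's sign-separation description already does that. The automorph (equivalently, Lagrange's periodicity for the continued fraction of $\alpha$) is only needed to justify that travel along the river is governed by the periodic block $[\overline{b_1,\dots,b_l}]$, which you can cite directly. The ``delicate bookkeeping at the junction'' you flag is real but entirely mechanical once the conjugate expansions are in hand.
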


This also gives a geometric interpretation of our formulas from the previous section, which is
 illustrated in Fig. 6 in the example of $Q=17x^2-12xy+2y^2$ with 
$$
\alpha=\frac{6+\sqrt{2}}{17}=[0,2,3,\overline{2}], \quad \bar\alpha=\frac{6-\sqrt{2}}{17}=[0,3,1,\overline{2}].
$$

 \begin{figure}[h]
\begin{center}
\includegraphics[height=80mm]{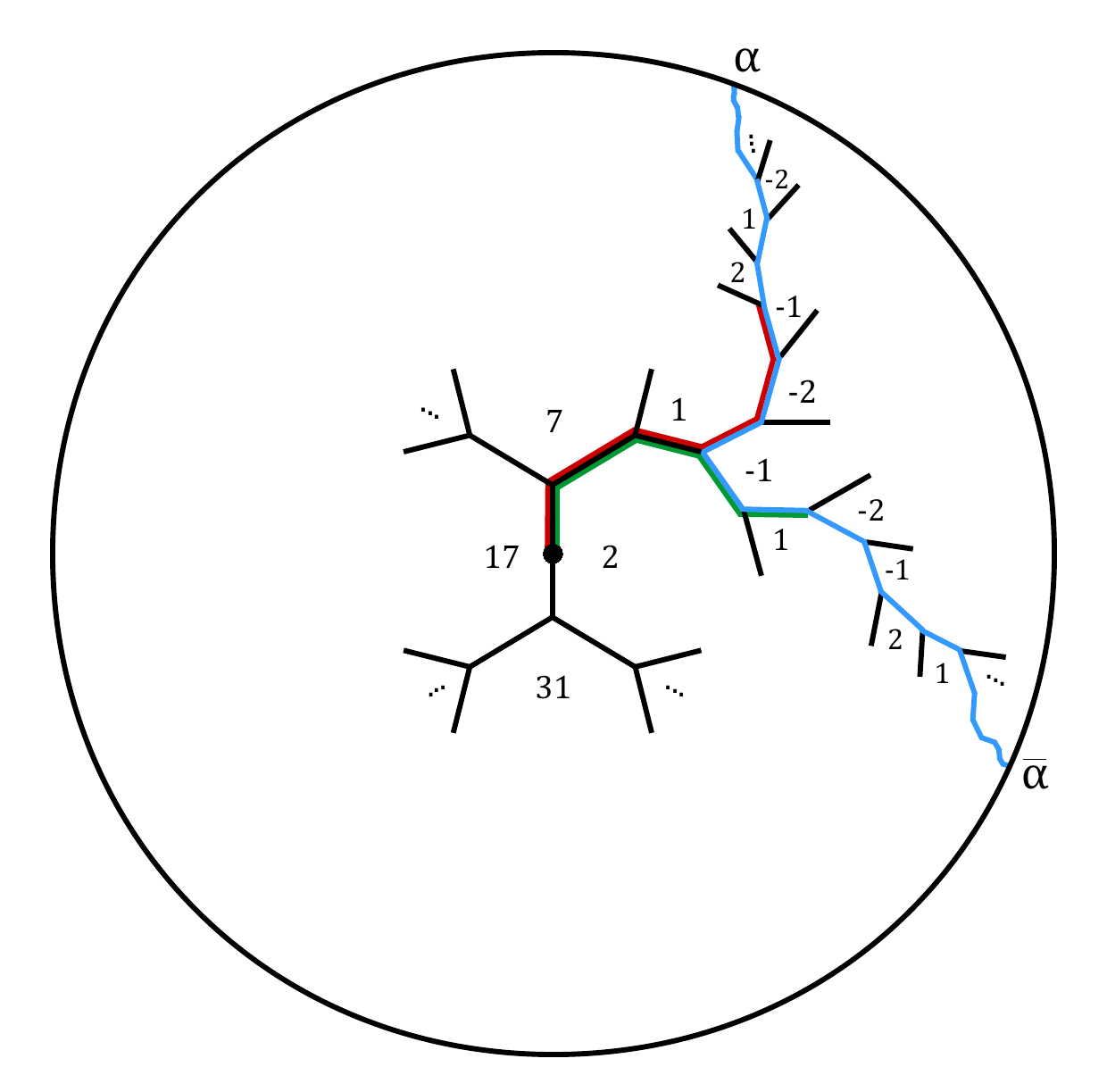} 
\caption{\small Paths to $\alpha$ and $\bar\alpha$ and Conway river for $Q=17x^2-12xy+2y^2.$}
\end{center}
\end{figure}

Let us describe now the special (Galois) case when $\alpha$ is a pure periodic continued fraction.

Let us call the quadratic form $Q(x, y) = ax^2 + hxy + by^2$ a {\it Galois form} if the continued fraction expansion of the dominant root of
$Q(\alpha,1)=0$ is pure periodic. By the Galois theorem this is equivalent to the conditions $\alpha>1, \, -1<\bar\alpha<0.$

\begin{prop}
An integer binary quadratic form $Q$ is Galois if and only if $Q(1,0) = a$, $Q(0,1) = b$, $Q(1,1) = c$ satisfy
$$
ab < 0, \,\,
ac < 0, \,\,
ac' = a(2a+2b-c) >0.
$$
\end{prop}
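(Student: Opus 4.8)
The plan is to characterize the Galois property directly in terms of the topograph values $a, b, c$ by translating the Galois conditions $\alpha > 1$ and $-1 < \bar\alpha < 0$ (from the Galois theorem) into inequalities on the coefficients of $Q$. The two roots satisfy $\alpha + \bar\alpha = -h/a$ and $\alpha\bar\alpha = b/a$, so the Galois conditions become constraints on the elementary symmetric functions of the roots, which I can then re-express through $a, b$, and $c = a+b+h$.

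First I would unpack what $-1 < \bar\alpha < 0$ and $\alpha > 1$ mean for $Q$. Since $Q(x,1) = ax^2 + hx + b = a(x-\alpha)(x-\bar\alpha)$, evaluating at the integer arguments $x = 0, 1, -1$ gives exactly the topograph quantities we want: $Q(0,1) = b = a\alpha\bar\alpha$, $Q(1,1) = c = a(1-\alpha)(1-\bar\alpha)$, and $Q(-1,1) = a - h + b = 2a + 2b - c =: c'$, which equals $a(1+\alpha)(1+\bar\alpha)$. The key observation is that the sign of each of these products is controlled by the positions of $\alpha$ and $\bar\alpha$ relative to the points $0$, $1$, and $-1$. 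The plan is to show: $\bar\alpha \in (-1,0)$ places $\bar\alpha$ strictly between $-1$ and $0$, while $\alpha > 1$ places $\alpha$ to the right of both; reading off the signs of the three factored products then yields $ab < 0$, $ac < 0$, and $ac' > 0$.

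Concretely, I would argue sign by sign. Since $\alpha > 0 > \bar\alpha$, the product $\alpha\bar\alpha < 0$, so $b = a\alpha\bar\alpha$ has sign opposite to $a$, giving $ab < 0$. Since $\alpha > 1$ we have $1 - \alpha < 0$, and since $\bar\alpha < 0 < 1$ we have $1 - \bar\alpha > 0$; hence $(1-\alpha)(1-\bar\alpha) < 0$, so $c = a(1-\alpha)(1-\bar\alpha)$ has sign opposite to $a$, giving $ac < 0$. Finally $1 + \alpha > 0$ and (using $\bar\alpha > -1$) $1 + \bar\alpha > 0$, so $(1+\alpha)(1+\bar\alpha) > 0$ and $c' = a(1+\alpha)(1+\bar\alpha)$ has the same sign as $a$, giving $ac' > 0$. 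This establishes the forward direction; for the converse I would check that these three sign conditions, together with $Q$ being indefinite and not representing zero, force $\alpha$ and $\bar\alpha$ back into the Galois ranges, essentially by reversing each equivalence.

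The main obstacle is the converse direction: the three sign inequalities constrain the products but I must verify they pin down enough to recover \emph{both} $\alpha > 1$ and $-1 < \bar\alpha < 0$ without ambiguity, and in particular rule out the alternative sign assignment (where $\alpha$ and $\bar\alpha$ swap roles, or where both roots sit on the same side of a marked point). Here I would lean on the standing assumption that $\alpha$ is the dominant root $\frac{-h+\sqrt D}{2a}$ and on the fact that $Q$ is indefinite not representing zero, so that $\alpha, \bar\alpha$ are distinct irrationals with $\alpha \neq \bar\alpha$; combined with $ab<0$ forcing the roots to straddle $0$, the remaining inequalities then determine the configuration uniquely. I expect this bookkeeping of sign cases, rather than any deep computation, to be where the care is needed.
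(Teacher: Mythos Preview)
Your argument is correct and is essentially the algebraic unpacking of the paper's one-line geometric proof. The paper simply observes that $\alpha>1$ and $-1<\bar\alpha<0$ hold precisely when the Conway river passes along the two edges separating the regions labelled $a,c'$ from those labelled $b,c$, which (since the river separates positive from negative values) is exactly the statement that $a,c'$ share one sign and $b,c$ the opposite. Your approach makes this explicit by factoring $Q(x,1)=a(x-\alpha)(x-\bar\alpha)$ and evaluating at $x=-1,0,1$; this is the same idea, just computed rather than read off the picture. Your handling of the converse (using $ab<0$ to force the roots to straddle $0$, then $ac'>0$ and $ac<0$ to locate them relative to $\pm1$, and finally the dominant-root convention to identify which is $\alpha$) is the careful version of what the paper leaves implicit in the phrase ``obvious geometrically.''
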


The proof is obvious geometrically: $\alpha > 1$ and $-1 < \overline{\alpha} < 0$ is true if and only if the Conway river travels along the edges separating $a$ and $c'$ from $b$ and $c$, see Fig. 7.

	\begin{figure}[h]
	\centering
	\includegraphics[scale=0.3]{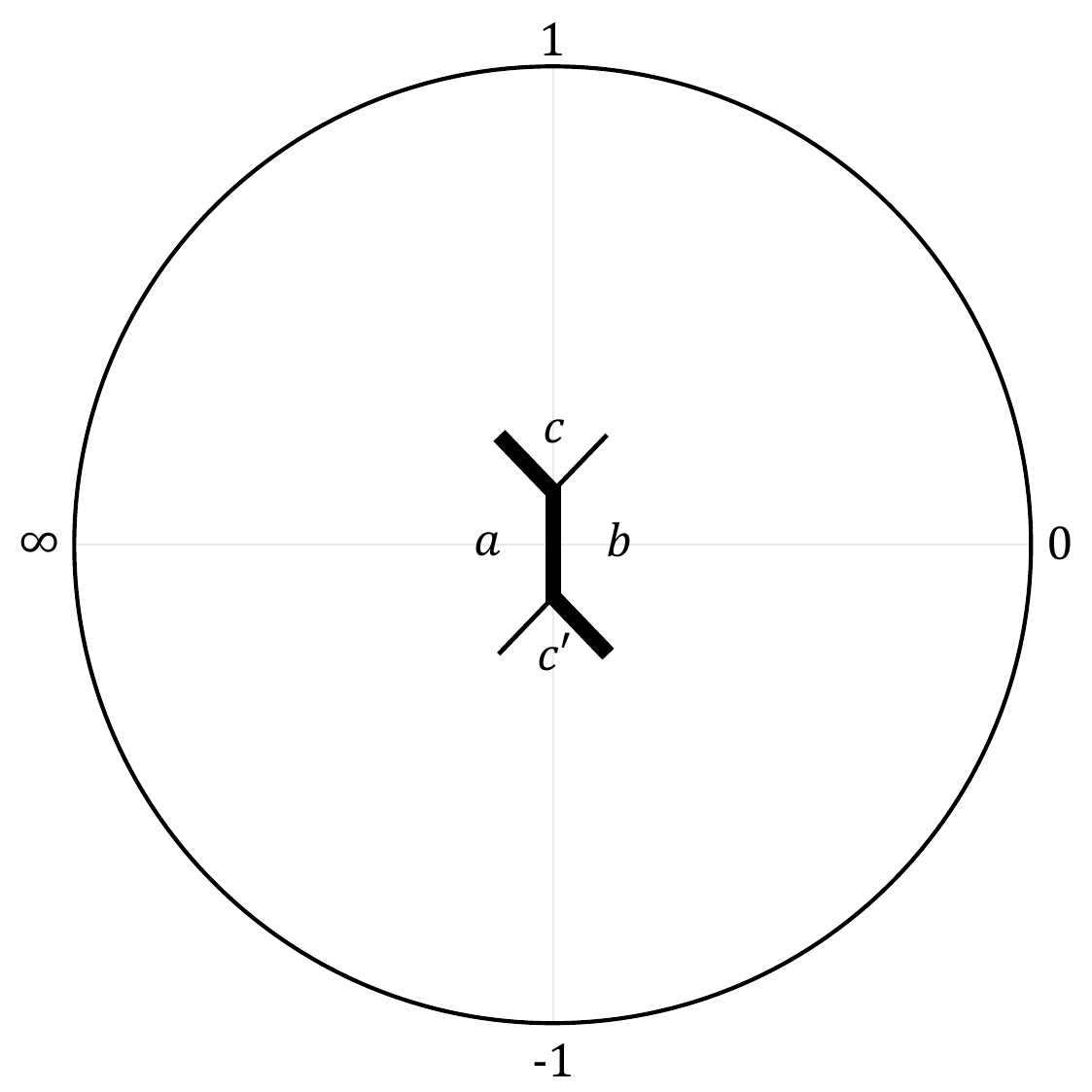} \label{fig:basepluslimit}
	\caption{Galois form $Q$ on the Conway river.}
	\end{figure}
	
	An example of the Galois form is the ``golden" form $Q=x^2-xy-y^2$ corresponding to $a=1=c', b=c=-1$ with
	$$\alpha=\varphi=\frac{1+\sqrt{5}}{2} =[\bar 1], \,\bar\alpha=\frac{1-\sqrt{5}}{2}=-[0,\bar 1].$$

\section{Lyapunov exponents for values of binary forms}

Now we are ready to state and prove our main results.

Let $Q(x,y)=ax^2+hxy+by^2$ be an integer binary quadratic form (definite or indefinite). 

If the form $Q$ is indefinite, then we first assume that $Q$ does not represent zero in the sense that
$Q(x,y)\neq 0$ for all $(x,y)\in \mathbb Z^2\setminus(0,0).$
In that case the two roots $\alpha_\pm=\alpha,\bar{\alpha}$ of the quadratic equation
$$
Q(\xi,1)=0
$$
are real quadratic irrationals, corresponding to the ends of the Conway river.

To study the growth of the values of $Q$ along the path $\gamma_\xi$ we define the corresponding Lyapunov exponent as
\beq{defL}
\Lambda_Q(\xi)=\limsup_{n\to\infty}\frac{\ln |Q_n(\xi)|}{n}, \quad |Q_n(\xi)| = \max(|a_n(\xi)|, |b_n(\xi)|, |c_n(\xi)|),
\eeq
where $a_n(\xi), b_n(\xi), c_n(\xi)$ are the values of $Q$ at the $n$-th superbase on the path $\gamma_\xi.$

Let $\Lambda(\xi)$ be the Lyapunov exponent of the monoid $SL_2(\mathbb N)$ introduced in \cite{SV}:
\beq{defmonoid}
\Lambda(\xi)=\limsup_{n\to\infty}\frac{\ln \rho(A_n(\xi))}{n},
\eeq
where $A_n(\xi)=A_n(\gamma_\xi)$ are the matrices (\ref{mata}) and $\rho(A)$ is the spectral radius of the matrix $A.$

\begin{Theorem}
For the definite integer binary quadratic forms $Q$ the Lyapunov exponent
$$
\Lambda_Q(\xi)=2\Lambda(\xi).
$$
For the indefinite integer binary quadratic forms $Q$ not representing 0 we have
\begin{equation*}
\Lambda_Q(\xi) = \left\{
  \begin{array}{lr}
   2\Lambda(\xi)& {\text if} \,\, \xi \neq \alpha_\pm\\
    0 &{\text if} \,\,  \xi = \alpha_\pm.
  \end{array}
\right. 
\end{equation*}
In other words, the only two exceptional paths with zero growth are those leading to two ends of the Conway river of $Q.$
\end{Theorem}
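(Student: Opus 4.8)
The plan is to reduce the statement to the growth of the matrices $A_n=A_n(\xi)$. By construction the $n$-th superbase consists of the two columns $\mathbf{e}_1,\mathbf{e}_2$ of $A_n$ together with $\mathbf{e}_3=-(\mathbf{e}_1+\mathbf{e}_2)$, so that $a_n=Q(\mathbf{e}_1)$, $b_n=Q(\mathbf{e}_2)$, $c_n=Q(\mathbf{e}_3)$ and $|Q_n(\xi)|=\max_j|Q(\mathbf{e}_j)|$. Writing $Q(\mathbf{v})=\mathbf{v}^{T}M\mathbf{v}$ with $M=\left(\begin{smallmatrix}a&h/2\\h/2&b\end{smallmatrix}\right)$, the numbers $a_n,b_n$ are diagonal entries of the congruent matrix $M_n=A_n^{T}MA_n$, which gives at once the uniform upper bound $|Q_n(\xi)|\le C\|A_n\|^{2}$ with $C=C(Q)$. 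The one input I will borrow from \cite{SV} is that the exponential growth rate of the operator norm agrees with that of the spectral radius, i.e. $\limsup_n\frac1n\ln\|A_n\|=\limsup_n\frac1n\ln\rho(A_n)=\Lambda(\xi)$; the inequality $\rho\le\|\cdot\|$ is immediate, and the reverse comes from Perron--Frobenius (so that $\rho(A_n)$ is at least the smallest row sum of the nonnegative matrix $A_n$) together with the fact that the two row sums differ by a factor that is only polynomial in the word length $n$. Granting this, everything reduces to comparing $|Q_n(\xi)|$ with $\|A_n\|^{2}$.

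For a definite form, say positive definite after a global sign change, there are constants $0<m\le M$ with $m\|\mathbf{v}\|^{2}\le Q(\mathbf{v})\le M\|\mathbf{v}\|^{2}$ for all $\mathbf{v}$. Hence $|Q_n(\xi)|\asymp\max_j\|\mathbf{e}_j\|^{2}\asymp\|A_n\|^{2}$, and taking logarithms and dividing by $n$ yields $\Lambda_Q(\xi)=2\Lambda(\xi)$ for every $\xi$.

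For an indefinite form not representing $0$ one has $a\neq0$, and I will use the factorisation $Q(x,y)=a(x-\alpha y)(x-\bar\alpha y)$, so that $|Q(\mathbf{v})|=|a|\,y^{2}\,|x/y-\alpha|\,|x/y-\bar\alpha|$ for $\mathbf{v}=(x,y)$. Along $\gamma_\xi$ the slopes of all three superbase vectors tend to $\xi$ as $n\to\infty$ (both bracketing Farey fractions and their mediant). If $\xi\neq\alpha,\bar\alpha$ then for all large $n$ the product $|x/y-\alpha|\,|x/y-\bar\alpha|$ is bounded above and below by positive constants, so once more $|Q_n(\xi)|\asymp\|A_n\|^{2}$ and $\Lambda_Q(\xi)=2\Lambda(\xi)$; the finitely many small $n$ do not affect the $\limsup$.

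The exceptional case $\xi=\alpha_\pm$ is exactly where the two linear factors conspire. Take $\xi=\alpha$; as described in the preceding section the path $\gamma_\alpha$ eventually runs along the Conway river, and the columns of $A_n$ are consecutive convergents $p_k/q_k$, $p_{k-1}/q_{k-1}$ of $\alpha$. The classical estimates $|p_k-\alpha q_k|<1/q_{k+1}\le1/q_k$ and $|p_{k-1}-\alpha q_{k-1}|<1/q_k$ make the factor involving $\alpha$ of size $O(1/q_k)$ for each of $\mathbf{e}_1,\mathbf{e}_2$ and the mediant $\mathbf{e}_3$, while the factor involving $\bar\alpha$ is of size $O(q_k)$ because $x/y\to\alpha\neq\bar\alpha$. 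Hence every $|Q(\mathbf{e}_j)|=|a|\cdot O(1/q_k)\cdot O(q_k)=O(1)$ is bounded by a constant depending only on $Q$ (equivalently, the values of $Q$ along the periodic river are periodic, hence bounded), so $\ln|Q_n(\alpha)|=O(1)$ and $\Lambda_Q(\alpha)=0$; the same holds for $\bar\alpha$. I expect the main obstacle to be the norm-versus-spectral-radius comparison underlying the upper bound $\Lambda_Q(\xi)\le2\Lambda(\xi)$: for a single matrix the operator norm can vastly exceed the spectral radius (already $A_n=L^{n}$ has $\rho=1$ but $\|A_n\|\sim n$, and more unbalanced $SL_2(\mathbb{N})$ matrices have $\|A_n\|$ of order $\rho(A_n)^{2}$), and one must check that every such imbalance arises only through a large continued-fraction partial quotient, which costs proportionally many letters and hence forces the corresponding growth to be subexponential in $n$. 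Once this comparison is secured the remaining cases are routine, and the only genuinely new feature is the cancellation in $(x-\alpha y)(x-\bar\alpha y)$ along the river isolated above.
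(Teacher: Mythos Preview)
Your proof is correct and takes a genuinely different route from the paper's. The paper first isolates the ``climbing'' situation where $a,b,h$ are all positive integers, uses the explicit $3\times3$ representation of $SL_2(\mathbb Z)$ on the coefficients together with the integer lower bound $a,b,h\ge1$ to sandwich $|Q_n|_h$ between constant multiples of $w_n^2=(r_n+s_n)^2$, and then, for indefinite $Q$, argues by Conway's river geometry that any path either eventually enters such a climbing region (on one side of the river or the other) or is trapped on the periodic river. You instead bound $|Q_n|$ directly: spectral equivalence $m\|\mathbf v\|^2\le Q(\mathbf v)\le M\|\mathbf v\|^2$ for definite $Q$, and the factorisation $Q(x,y)=a(x-\alpha y)(x-\bar\alpha y)$ for indefinite $Q$, using that for irrational $\xi\neq\alpha_\pm$ the slopes of all three superbase vectors converge to $\xi$ so both linear factors are eventually $\asymp y$. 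Your approach is more analytic and more uniform across the definite/indefinite split, and it makes the mechanism of cancellation at $\xi=\alpha_\pm$ completely transparent; the paper's approach stays closer to Conway's topographic picture and makes the river's geometric role explicit in the case analysis. Both proofs ultimately lean on the same input from \cite{SV}, namely that $\Lambda(\xi)$ equals the growth rate of the row sum $w_n=r_n+s_n$ (equivalently of $\|A_n\|$), so the obstacle you flag at the end is not new to your argument.

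Two small patches worth making explicit: for rational $\xi$ one of the two Farey columns eventually freezes, so your ``slopes converge to $\xi$'' step does not literally apply, but there both $\Lambda(\xi)=0$ and $\Lambda_Q(\xi)=0$ follow from the polynomial growth of $\|A_n\|$; and the identity $|Q(\mathbf v)|=|a|\,y^2|x/y-\alpha||x/y-\bar\alpha|$ needs $y\neq0$, which fails only for finitely many $n$ and hence is harmless for the $\limsup$.
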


\begin{proof}
Let us introduce first for the form $Q(x,y)=ax^2+hxy+by^2$ its matrix defined by
$$
Q(x,y)=\begin{pmatrix}
  x & y 
\end{pmatrix}\begin{pmatrix}
  a & \frac{h}{2} \\
  \frac{h}{2} & b \\
\end{pmatrix}\begin{pmatrix}
  x \\
  y \\
\end{pmatrix},
$$
which, slightly abusing the notation, we also denote by $Q$:
$$
Q=\begin{pmatrix}
  a & \frac{h}{2} \\
  \frac{h}{2} & b \\
\end{pmatrix}.
$$
Its determinant $$\det Q=ab-\frac{h^2}{4}=-\frac{D}{4}$$ is minus a quarter of the discriminant $D=h^2-4ab$ of the quadratic equation $Q(\xi,1)=0.$

The action of the group $SL_2(\mathbb Z)$ on $Q$ is defined in the matrix form by
$$Q \to Q'=A^t Q A,$$ where
\beq{A}
A= \begin{pmatrix}
  p & q \\
  r & s \\
\end{pmatrix} \in SL_2(\mathbb Z).
\eeq
This determines a 3-dimensional representation $A \to \hat A$ of $SL_2(\mathbb Z)$, when $A$ acts on the coefficients $(a,h,b)$ of the form $Q$ by 
$$
\hat A=\begin{pmatrix}
  p^2 & 2pr & r^2 \\
  pq & pr+qs & rs \\
  q^2 & 2qs & s^2 \\
\end{pmatrix}.
$$
If the eigenvalues of the matrix $A$ are $\lambda$ and $\lambda^{-1},$ then the eigenvalues of $\hat A$ are $\lambda^2$,  $\lambda^{-2}$ and 1.

Consider first the case when the coefficients of the form $a,b,h$ are all positive. In that case Conway's Climbing Lemma \cite{Conway} guarantees the permanent growth whichever upward path we choose (see Fig. 8). 

\begin{figure}[h]
	\centering
	\includegraphics[height=40mm]{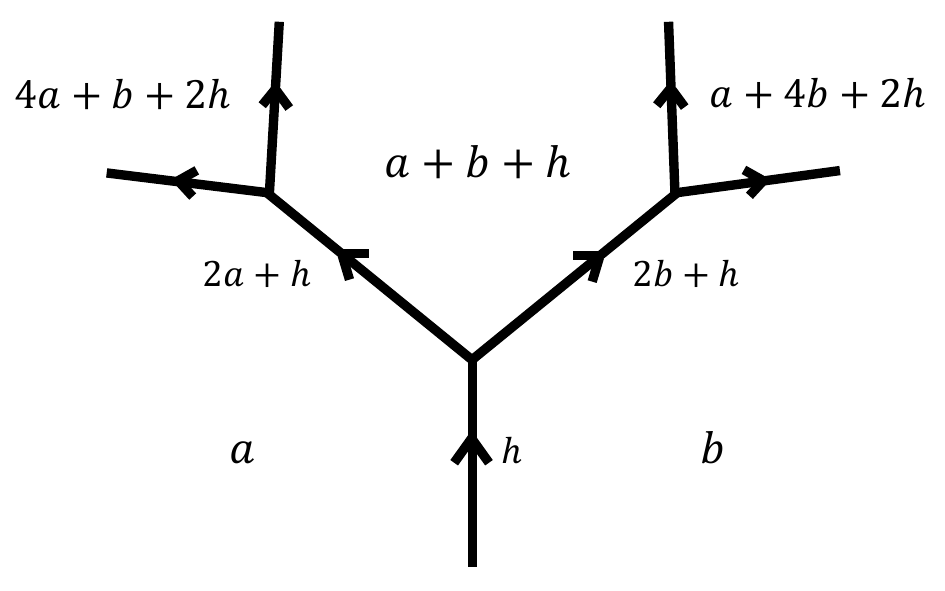} \label{fig:CL}
	\caption{Climbing Lemma.}
	\end{figure}

We claim that the corresponding growth is given by $\Lambda_Q(\xi)=2\Lambda(\xi).$

To prove this, introduce two norms on binary forms $Q=ax^2+hxy+by^2$. The first norm is the one used in the definition of $\Lambda_Q$:
\beq{norm1}
|Q|=\max(|a|,|b|,|c|), \,\, c=a+b+h.
\eeq
The second norm is 
\beq{norm2}
|Q|_h=\max(|a|,|b|,|h|).
\eeq
Since any two norms in a finite-dimensional space are equivalent, we have for all $Q$
$$
c_1|Q|_h\leq|Q|\leq c_2|Q|_h
$$
for some positive constants $c_1,c_2.$
This means that we can replace $|Q|$ by $|Q|_h$ in the definition 
$$
\Lambda_Q(\xi)=\limsup_{n\to\infty}\frac{\ln |Q_n(\xi)|}{n}=\limsup_{n\to\infty}\frac{\ln |Q_n(\xi)|_h}{n}.
$$

Now let $\gamma_\xi$ be a path and 
$$A_n(\xi)= \begin{pmatrix}
  p_n & q_n \\
  r_n & s_n \\
\end{pmatrix} \in SL_2(\mathbb Z).
$$ 
be the corresponding matrices (\ref{mata}), so that the matrices of the forms along the path are
$$
Q_n(\xi)=A_n(\xi)^TQ A_n(\xi),
$$
or, explicitly
\beq{3d}
\begin{pmatrix}
  a_n \\
  h_n \\
  b_n\\
\end{pmatrix}=\begin{pmatrix}
  p_n^2 & 2p_nr_n & r_n^2 \\
  p_nq_n & p_nr_n+q_ns_n & r_ns_n \\
  q_n^2 & 2q_ns_n & s_n^2 \\
\end{pmatrix}
\begin{pmatrix}
  a \\
  h \\
  b\\
\end{pmatrix}.
\eeq

Assuming without loss of generality that $\xi \in [0,1]$ we have
$$
\Lambda(\xi)=\limsup_{n\to\infty}\frac{\ln w_n(\xi)}{n},
$$
where $w_n=r_n+s_n$ (see Proposition 1 in \cite{SV}).

Consider the cases $\xi=0$ and $\xi>0$ separately.
When $\xi=0$ we have
$$A_n(0)= \begin{pmatrix}
 1 & 0 \\
  n & 1 \\
\end{pmatrix},
$$ 
which determines the quadratic growth of $|Q_n|_h$ in $n$, and as a result
$$
\Lambda_Q(0)=0=2\Lambda(0).
$$

If $0<\xi\leq 1$ we have $p_n\leq r_n, q_n\leq s_n$ and thus from (\ref{3d})
$$
|Q_n(\xi)|_h \leq \max((p_n+r_n)^2, (p_n+s_n)(q_n+r_n), (q_n+s_n)^2)|Q|_h
$$
$$
\leq \max(4r_n^2, (r_n+s_n)^2, 4s_n^2)|Q|_h\leq 4(r_n+s_n)^2 |Q|_h=4w_n^2 |Q|_h.
$$
On the other hand, by assumption the initial $a,b,h$ are all positive, so, since they are also integer, $a,b,h\geq 1.$
From (\ref{3d}) then it follows that
$$
|Q_n(\xi)|_h\geq (p_n+r_n)^2+ (p_n+s_n)(q_n+r_n)+ (q_n+s_n)^2
$$
$$
>r_n^2+s_n^2 \geq \frac{1}{2}(r_n+s_n)^2=\frac{1}{2}w_n^2.
$$
Thus we have
$$
\frac{1}{2}w_n^2\leq|Q_n(\xi)|_h\leq4w_n^2 |Q|_h,
$$
which implies that
$$
\Lambda_Q(\xi)=\limsup_{n\to\infty}\frac{\ln |Q_n(\xi)|}{n}=\limsup_{n\to\infty}\frac{\ln w_n^2(\xi)}{n}=2\Lambda(\xi).
$$
This completes the proof in the growing case. 

Assume now that $a,b$ are positive, but $h<0$, so the values of $Q$ decrease.
Consider first the case when the form $Q$ is positive definite. Then all the values are positive, so it is clear that for any path at some point the growth will start again and we can repeat our arguments to get the claim.

However, if $Q$ is indefinite this is no longer true, since $Q$ can take negative values as well. By Conway's result \cite{Conway} for indefinite $Q$ not representing zero positive and negative values of $Q$ are separated by an infinite periodic river. 

There are three possibilities: either the path does not cross the river, it crosses the river (or starts on the river and then leaves it), or it is after some point stuck on the river forever.

If the path does not cross the river the values of $Q$ will remain positive and thus bounded from below, so at some point we will have growth and we repeat the arguments to prove the claim in this case as well.

If the path crosses the river then at some point all the values of $a_n, b_n, c_n$ will become negative, and we can repeat the arguments for $-Q$ to get the claim. If we start on the river (which means that $a$ and $b$ are of opposite signs), then after the point of departure we will have $a$ and $b$ of the same sign and can use the same arguments as before.

Finally, there are exactly two paths which are stuck on the river, corresponding to $\xi=\alpha_\pm.$ In that case we have no growth because of periodicity of the Conway river \cite{Conway} and thus 
$$
\Lambda_Q(\alpha_\pm)=0.
$$
Note that the corresponding $\Lambda(\alpha_\pm)\neq 0,$ so $\Lambda_Q(\xi)\neq 2\Lambda(\xi)$ in that case.
\end{proof}

Let us discuss now the remaining case: the indefinite forms representing zero and the semidefinite forms with zero discriminant.

\begin{prop}
For indefinite integer quadratic forms $Q$ representing zero $$
\Lambda_Q(\xi)=2\Lambda(\xi),\,\, \xi\in \mathbb RP^1.
$$
The same is true for the semidefinite integer forms.

\end{prop}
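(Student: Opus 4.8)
The plan is to reduce both remaining cases to the growth of \emph{linear} forms, exploiting that the discriminant $D=h^2-4ab$ is now a perfect square. If $Q$ represents zero its roots $\alpha_\pm$ are rational and $Q$ splits over $\mathbb{Q}$ as $Q=\kappa\,L_+L_-$, where $L_\pm(x,y)=x-\alpha_\pm y$ are, after clearing denominators, integer linear forms whose kernels are the rational directions $\alpha_\pm$, and $\kappa\in\mathbb{Q}^{\times}$ is a nonzero constant. Writing $\mathbf v_n$ for a superbase vector at the $n$-th step of $\gamma_\xi$, one has $Q(\mathbf v_n)=\kappa\,L_+(\mathbf v_n)\,L_-(\mathbf v_n)$, so the growth of $|Q_n|$ is governed by the product of the growths of the two linear factors along the path.

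The key estimate, parallel to the one in the proof of the Theorem, is that a linear form grows like $w_n=r_n+s_n$ along $\gamma_\xi$ unless $\xi$ is its kernel direction. Indeed, every superbase vector $\mathbf v_n$ has slope tending to $\xi$, and (taking $\xi\in[0,1]$ as before) $\|\mathbf v_n\|\asymp w_n$, so that $|L_\pm(\mathbf v_n)|\asymp w_n\,|\xi-\alpha_\pm|$ for large $n$ whenever $\xi\neq\alpha_\pm$. Multiplying the two factors gives $|Q_n(\xi)|\asymp w_n^2$ and hence
\[
\Lambda_Q(\xi)=\limsup_{n\to\infty}\frac{\ln w_n^2}{n}=2\Lambda(\xi),\qquad \xi\neq\alpha_{\pm}.
\]
The two remaining directions $\xi=\alpha_\pm$ are now rational, so the Farey path is finite, $\Lambda(\alpha_\pm)=0$, and the values of $Q$ along it are bounded, giving $\Lambda_Q(\alpha_\pm)=0=2\Lambda(\alpha_\pm)$. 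Conceptually, because the roots are rational there is no bi-infinite periodic river on which a path can become trapped, so the exceptional mismatch of the Theorem disappears and $\Lambda_Q=2\Lambda$ holds on all of $\mathbb{R}P^1$.

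For semidefinite forms $D=0$, the two roots coalesce into a single rational double root $\alpha=-h/(2a)$, and the factorization degenerates to $Q=\kappa\,L^2$ with $L(x,y)=x-\alpha y$; thus $|Q_n(\xi)|\asymp\bigl(|L(\mathbf v_n)|\bigr)^2$ and the whole analysis collapses onto one linear form. This rank-one situation is the step I expect to be the main obstacle: the estimate above gives $|L(\mathbf v_n)|\asymp w_n$ for every $\xi\neq\alpha$, which on its face yields $\Lambda_Q(\xi)=2\Lambda(\xi)$, with the value $\Lambda_Q(\alpha)=0$ occurring only in the single degenerate direction. Establishing the blanket identity $\Lambda_Q\equiv 0$ therefore seems to require measuring growth only in the genuinely two-dimensional (transverse) part of the form, which is trivial precisely when $\det Q=0$; making this normalization precise, and checking that the rank-one form contributes no transverse growth along any path, is the point I would scrutinise most carefully, since the naive reading of the definition of $\Lambda_Q$ already produces nonzero values for $\xi\neq\alpha$.
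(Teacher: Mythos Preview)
Your treatment of the indefinite case representing zero is correct and takes a different route from the paper.  The paper simply declares that ``the previous arguments work'' and invokes Conway's lakes picture, whereas you factor $Q=\kappa L_+L_-$ and reduce the question to the growth of a single linear form along $\gamma_\xi$.  This makes the mechanism at the rational roots completely transparent: since $\alpha_\pm\in\mathbb{Q}$, both $\Lambda$ and $\Lambda_Q$ vanish there and no exceptional set survives.  One small point to tighten: the estimate $|L_\pm(\mathbf v_n)|\asymp w_n\,|\xi-\alpha_\pm|$ is not uniform over all three superbase vectors at step $n$ (one of them may be a stale convergent), but since $|Q_n|$ is a maximum it suffices that the dominant vector $(p_n+q_n,r_n+s_n)$ satisfies it, which it does once $n$ is large enough that $|p_n/r_n-\xi|\ll|\xi-\alpha_\pm|$.

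On the semidefinite case your instinct is right and the paper's statement is the problem, not your argument.  Take $Q(x,y)=x^2$ and the Fibonacci path $\gamma_\varphi$: the superbase vectors are $(F_{k},F_{k-1})$ up to indexing, so $|Q_n|=F_n^2$ and
\[
\Lambda_Q(\varphi)=\limsup_{n\to\infty}\frac{\ln F_n^2}{n}=2\ln\varphi=2\Lambda(\varphi)\neq 0.
\]
The paper's one-line justification (``zero growth \ldots follows from the Conway description'') presumably refers to the fact that the values on the \emph{shore} of the single lake are constant (for $Q=ax^2$ the vectors adjacent to $(0,1)$ are $(1,n)$ with $Q(1,n)=a$), but that says nothing about paths leaving the shore.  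Your factorization $Q=\kappa L^2$ gives the correct conclusion $\Lambda_Q(\xi)=2\Lambda(\xi)$ for every $\xi$, including the rational double root where both sides vanish.  There is no hidden ``transverse'' normalization to look for; the blanket identity $\Lambda_Q\equiv 0$ is simply false under the paper's own definition of $\Lambda_Q$.
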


\begin{proof}
The topograph of the indefinite forms representing zero (with discriminant $D$ being total square) is described in Conway \cite{Conway}.
In that case we have two ``lakes" corresponding to zero values connected by a finite river separating positive and negative values of $Q,$ see Fig. 9.

\begin{figure}[h]
	\centering
	\includegraphics[height=40mm]{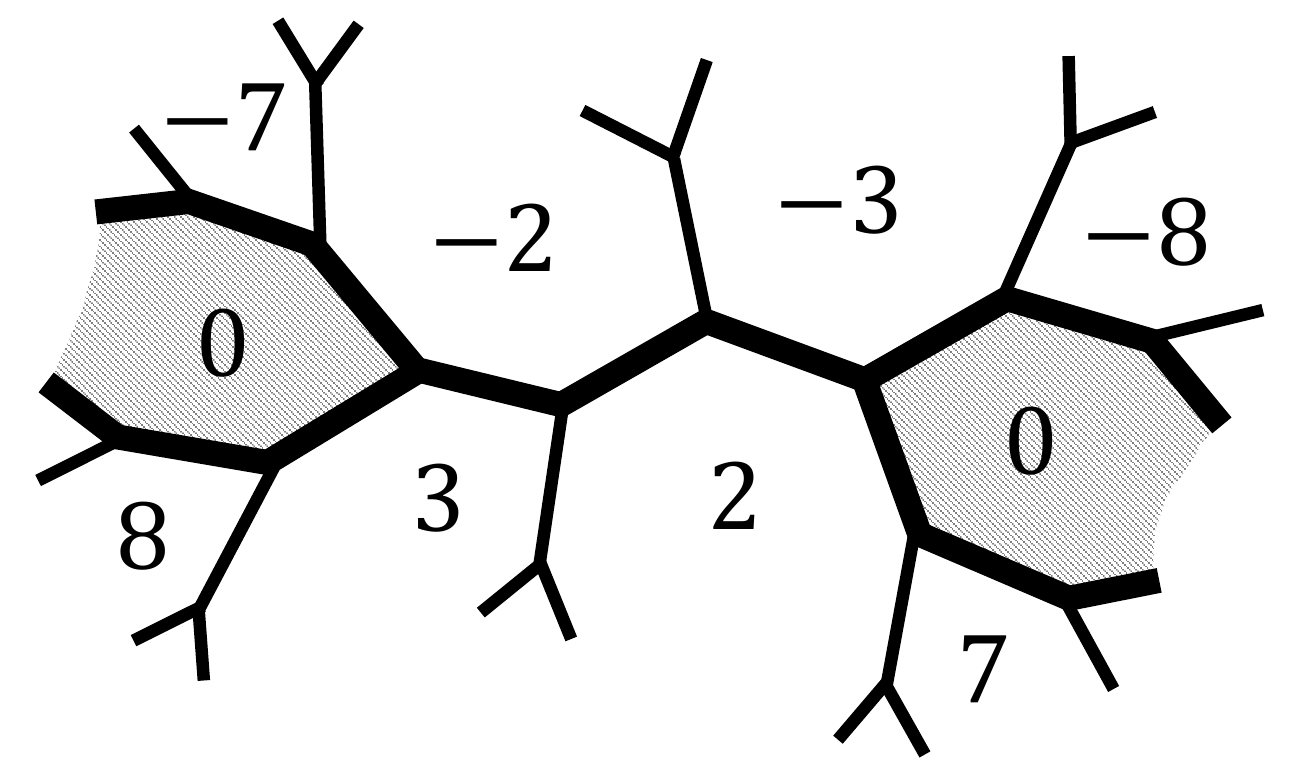} \label{fig:CL}
	\caption{Conway river and lakes for $Q=2x^2 - xy - 3y^2.$}
	\end{figure}

Note that modulo $GL_2(\mathbb Z)$ any such form can be reduced to
$$
Q_{red}=y(ax-by), \,\, a,b \in \mathbb Z_+, \, a>b.
$$
The sequence of turns of the Conway river, connecting the lakes, is determined by the finite continued fraction expansion
$$
\frac{a}{b}=[a_0,\dots,a_n].
$$
In particular, for $Q=2x^2 - xy - 3y^2$ the reduced form is $Q_{red}=y(5x-3y)$ and $\frac{5}{3}=[1,1,2]$, corresponding to the sequence of river's turns $R,L,R,R$ (see Fig. 9, where we start on the edge between 0 and 3 on the left lake).
In the exceptional case $b=0$ with $Q_{red}=axy$ the river disappears and two lakes are adjacent \cite{Conway}.

This picture agrees with the fact that the roots $\alpha_\pm$ of the corresponding equation
$$
Q(\alpha,1)=0
$$
are rational. A simple analysis shows that the previous arguments in this case work as well, with the only difference that now both
$\Lambda_Q(\alpha_\pm)$ and $\Lambda(\alpha_\pm)$ are zero, and thus equality $\Lambda_Q(\xi)=2\Lambda(\xi)$ holds for all $\xi$.
The same works for the semidefinite forms with $Q_{red}=ax^2$ as well.
\end{proof}

{\it Remark.} Most of these arguments work also for the binary forms $Q=ax^2+hxy+by^2$ with {\it real} coefficients $a,h,b$.
The Conway river is not necessarily periodic in this case and closely related to the Arnold sail in Klein's geometric approach to continued fractions \cite{Klein} (see \cite{SV1}).
However, in that case the values of the form along the Conway river may approach zero (see e.g. \cite{KW}), so the situation here is a bit more delicate.

\section{Concluding remarks}

Let us compare this with the growth of the Markov triples considered in \cite{SV}.
Markov triples are the positive solutions of the Markov equation \cite{Markov}
\beq{equa}
x^2+y^2+z^2-3xyz=0.
\eeq
They all can be found from the obvious one $(1,1,1)$ by compositions of Vieta involutions
\beq{invo}
(x,y,z) \rightarrow (x,y, 3xy-z)
\eeq
and permutations of $x,y,z.$

The right-hand side of the Markov equation
$$
F(x,y,z):=x^2+y^2+z^2-3xyz
$$
is cubic polynomial, which is {\it quadratic} in every variable, implying the Vieta involution.
It is also symmetric under the permutations of $x,y,z.$
One can consider also a version of the Markov equation
$$
F(x,y,z)=D,
$$
which has similar properties (see \cite{SV}).

We claim that this paper can be interpreted in a similar way for degree 2 polynomial
$$
B(x,y,z)=a^2+b^2+c^2-2ab-2ac-2bc.
$$
Indeed, the transformation $$(a,b,c)\rightarrow (a,b,c'=2a+2b-c)$$
is just the Vieta involution for the equation
\beq{B}
a^2+b^2+c^2-2ab-2ac-2bc=D.
\eeq
In fact, $B(a,b,c)$ is nothing other than the discriminant of the binary quadratic form $Q=ax^2+hxy+by^2:$
$$
D=h^2-4ab=(c-a-b)^2-4ab=a^2+b^2+c^2-2ab-2ac-2bc.
$$
Our results say that the Lyapunov exponents for the equation (\ref{B}) with $D<0$ are twice the Lyapunov exponents for the Markov dynamics.
When $D>0$ then the same is true with the exception of Conway river paths, for which the Lyapunov exponents vanish.

Note that the equation (\ref{B}) with $D<0$ determines the two-sheeted hyperboloid, while for $D>0$ this is a one-sheeted hyperboloid.
The semidefinite (degenerate) forms with $D=0$ correspond to the cone.

We would like to mention Klein's correspondence \cite{Klein} between indefinite binary quadratic forms and geodesics in hyperbolic plane, which we have learnt recently from Boris Springborn \cite{Springborn}.
As we have just seen the projectivized vector space of real binary quadratic forms is a real projective plane with the degenerate
forms forming a conic section determined by (\ref{B}) with $D=0.$ Definite forms correspond to the points inside
this conic, which in the Klein model represents the points of the hyperbolic plane. The indefinite forms correspond
to the points outside the conic, which by polarity are represented by the hyperbolic lines. On the binary tree these lines are nothing other than Conway rivers.


The Galois result is related to the reduction theory for the integer binary quadratic forms and the elements of $SL_2(\mathbb Z)$ going back to Gauss (see Chapter 7 in \cite{Karp} and references therein). In particular, we would like to mention an interesting paper by Manin and Marcolli \cite{MM}, where this theory is linked with non-commutative geometry and the quotient $\mathbb{R}P^1/PGL_2(\mathbb Z)$ is interpreted as a non-commutative version of the modular curve. It would be interesting to understand from this point of view the role of our function $\Lambda(\xi)$, which is naturally defined on this space.

Finally, a natural question is about the growth of values of binary forms $F$ of degree $d>2$. The corresponding action of $SL_2(\mathbb Z)$ is the restriction of the standard irreducible representation of $SL_2(\mathbb R)$ of dimension $d+1.$
This implies that generically the corresponding growth of the values of $F$ is 
$$
\Lambda_F(\xi)=d \Lambda(\xi),
$$
but the precise description of all exceptions is an interesting question, open even for binary cubic forms.
Note that in this case the values of $F$ on a superbase do not define $F$ uniquely and as a result we do not have local rules similar to the paralellogram rule.

\section{Acknowledgements}

We are very grateful to Alexey Bolsinov for numerous helpful discussions, to Oleg Karpenkov for explaining us his nice results on geometry of continued fractions \cite{Karp}, to Boris Springborn for sending us a preliminary version of his very interesting work \cite{Springborn} and to the referee for very helpful and constructive comments.

The work of K.S. was supported by the EPSRC as part of PhD study at Loughborough.

\end{document}